                         %&latex
\documentclass[12pt,a4paper,reqno]{amsart}
\usepackage{amsfonts,amsthm,latexsym,amsmath,amssymb,amscd,amsmath}

\usepackage[T2A]{fontenc}
\usepackage[cp1251]{inputenc}
\usepackage{latexsym,amssymb,amsthm}
\usepackage{url,graphics} 
\usepackage[dvips]{graphicx,epsfig}

\newtheorem{thm}{Theorem}[section]
\newtheorem{prop}[thm]{Proposition}
\newtheorem{lem}{Lemma}[section]
\newtheorem{defin}{Definition}[section]

\newtheorem{cor}{Corollary}[section]

\newtheorem{remark}{Remark}[section]
\newtheorem{question}{Question}
\newtheorem{conjecture}[question]{Conjecture}

\newtheorem{Not}[defin]{Notation}

\def\q#1.{{\bf #1.}}
\renewcommand\geq{\geqslant}
\renewcommand\leq{\leqslant}

\newcommand{\bR}{\mathbb{R}}
\newcommand{\bC}{\mathbb{C}}
\newcommand{\bK}{\mathbb{K}}

\newcommand {\HS} {\mathcal {H}}

\newcommand{\cal}{\mathcal}

\newcommand{\lY}{\overline{Y}}

\tolerance=500

\begin{document}
          \numberwithin{equation}{section}

          \title[Postnikov-Shapiro Algebras  and matroids]
          {Postnikov-Shapiro Algebras, Graphical Matroids and their generalizations}

\author[G. Nenashev]{Gleb Nenashev}
\address{  Department of Mathematics, Stockholm University, SE-106 91 Stockholm, Sweden,}
\email{nenashev@math.su.se}
\keywords{Commutative algebra, Graph, Hypergraph, Matroid,   Spanning trees and forests, Tutte polynomial}

\begin{abstract} 
A.Postnikov and B.Shapiro introduced a class of commutative algebras which enumerate forests and trees of graphs. Our main result is that the algebra counting forests depends only on graphical matroid and the converse. 

By these algebras we motivate generalization of the definition of spanning  forests and trees for  hypergraphs and the corresponding "hypergraphical" matroid. We present $3$ different equivalent definitions, which can be read independently from other parts of the paper.
\end{abstract}
%\date{\today}
\maketitle

\setcounter{tocdepth}{1}
\tableofcontents

\section{\bf Introduction}

There are a lot of results  about enumeration of spanning trees  and forests of graphs. 
The famous matrix-tree theorem of Kirchhoff (see~\cite{Kir} and p.~138 in~\cite{Tut}) claims that the number of spanning trees of a given graph $G$  equals  the determinant of the Laplacian matrix of $G$. Many  generalizations of the classical matrix-tree theorem were constructed in a long period,  e.g. for directed graphs,  matrix-forest theorems, etc (see~e.g.~\cite{ChK}, see~\cite{Bur} and references therein).
 It is also well known that the number of maximal spanning forests of $G$ (or equivalently trees for connected $G$)  equals  $T_G(1,1)$ while the number of 
all spanning forests of $G$ equals  $T_G(2,1)$, where $T_G$ is the Tutte polynomial of $G$ (see p.~237 in~\cite{Tut}).

We focus here on algebras introduced by A.\,Postnikov and B.\,Shapiro in ~\cite{PSh} %constructed several algebras associated to~$G$ whose total dimensions are equal to the number of either spanning trees or forests of~$G$. 
and on graphical matroids, both objects enumerate spanning trees and forests. Before our paper there was no direct relation between them, only across the Tutte polynomial, Critical group (Sandpile model). Also with the motivation given by these algebras we introduce $3$ equivalent definitions of spanning trees (forests) of a hypergraph and present corresponding "hypergraphical" matroid. If you are interested only in the latter, scroll to section~\ref{sec:hyper}.
 %Below we extend constructions of \cite{PSh} to a larger class of algebras.

\medskip

The field $\bK$ of zero characteristic is fixed throughout this paper, for example $\bC$ or $\bR$. By a graph we always mean an undirected graph without loops (multiple edges are allowed). We use the standard notation: $E(G)$ and $e(G)$ are the set and the number of edges of graph $G$; $V(G)$ and $v(G)$ are the set and the number of vertices of~$G$; $c(G)$ is the number of connected components of $G$ and $T_G(x,y)$ is the Tutte polynomial of $G$.

At first, we present two definitions of Postnikov-Shapiro algebras counting spanning forests.
\begin{Not}
\label{forest}
 Take an undirected graph $G$ on $n$ vertices.

\emph{(I)} Let $\Phi_{G}^{F}$ be the commutative algebra over $\bK$ generated by $\{ \phi_e: \ e\in E(G) \}$ satisfying the relations $\phi_e^{2}=0$, for any $e\in E(G)$.

Fix any linear order of vertices of $G$. For $i=1,\ldots,n$, set 
$$X_i=\sum_{e\in G} c_{i,e} \phi_e,$$
 where $$
c_{i,e}=\begin{cases} \;\;\;1\quad \text{if}\; e=(i,j), i<j;\\
                                            -1\quad\text{if}\; e=(i,j), i>j;\\
                                             \;\;\;0\quad \text{otherwise}.
\end{cases}
$$
%$c_{i,e}=\pm 1$ for vertices incident to $e$ (for the smallest vertex $v_i$, $c_{i,e}=1$; for the biggest vertex $v_j$, $c_{j,e}=-1$) and $0$ otherwise.
Denote by ${\cal C}_{G}^{F}$ the subalgebra of $\Phi_{G}^{F}$ generated by $X_1,\ldots ,X_n$.

\emph{(II)}   Consider the ideal $J_{G}^{F}$ in the ring $\bK[x_1,\cdots,x_n]$ generated by
$$p_I^{F}=\left(\sum_{i\in I} x_i\right)^{ D_I+1},$$
where $I$ ranges over all nonempty subsets of vertices, and $D_I$ is the total number of edges between vertices in $I$ and vertices outside~$I$.  Define the algebra   ${\cal B}_{G}^{F}$ as the quotient $\bK[x_1,\dots,x_n]/ J_{G}^{F}.$
\end{Not}

\begin{remark}
\label{signs}
In the first definition, the order of vertices is not important. Easy to see that all such algebras are isomorphic to the above definition of ${\cal C}_{G}^{F}$, even though we can separately  choose the "smallest" vertex for each edge. 
For an orientation $\overline{G}$ of graph $G$, we denote  by~${\cal C}_{\overline{G}}^{F}$ the subalgebra of $\{ \phi_e: \ e\in E(G) \}$, where $c_{i,e}=1$ for the end of $e$ and $-1$ for the beginning of $e$. 
\end{remark}

The original motivation of  these algebras is that, for the complete graph $K_n$, the algebra is isomorphic to the cohomology ring of the flag manifold $Fl_n$, see~\cite{PSS} and~\cite{ShSh}. Algebras of complete graphs are also related to Fomin-Kirillov and Orlik-Terao algebras, see their definition in~\cite{FK},~\cite{OT} and relations between the last two in~\cite{RL}. In paper~\cite{Berg} a big class of algebras was considered, which includes our case, Orlik-Terao and others.
% the last years these algebras became popular. 
Postnikov-Shapiro algebras have direct connections with $G$-parking functions (see~\cite{PSh},~\cite{ChP}), whose are stable configurations of the sandpile model introduced in~\cite{Dhar}. 

There are a few generalizations of $\cal{B}^F_G$ and $\cal{B}^T_G$ in the literature (the latter algebra counts trees in $G$ instead of forests, see notation in section~5), see the most important generalizations  in  papers~\cite{ArP} and~\cite{HR}. In both papers the main object of the generalizations is the algebra of the second definition (quotient algebra), they defined the so called Zonotopal algebras, where dimensions of algebras are numbers of lattice points of corresponding zonotops. 
In~\cite{Hua} the definitions of $\cal{B}^\Delta_G$ and $\cal{C}^\Delta_G$ were given  in terms of some simplex complex $\Delta$ on the set of vertices of $G$. With this definition algebras ${\cal B}_G^F$ and ${\cal B}_G^T$ become particular cases corresponding to different simplexes.

\smallskip

 We need the following classic notation, which was constructed by Tutte for his original definition of the polynomial.

\begin{Not} 
Fix some  linear order on the set $E(G)$ of edges of $G$. Let $F$ be any spanning forest in  $G$.
By $act_G(F)$ denote the number of all externally active edges of $F$, i.e. the number of edges $e\in E(G)\setminus F$ such that \emph{(i)}~subgraph $F+e$ has a cycle; \emph{(ii)} $e$ is the minimal edge in this cycle in the above linear order.

 Denote by $F^+$ the set of edges of the forest $F$ together with all externally active edges,
and denote by $F^-=E(G)\setminus F^+$ the set of externally nonactive edges.
\end{Not}

The following theorem is the main result of~\cite{PSh}, which shows that both definitions are equivalent and why it is called algebra counting spanning forests.

\begin{thm}[cf.~\cite{PSh}]
\label{thmPSh}

 For any graph $G$, algebras  ${\cal B}_{G}^{F}$ and ${\cal C}_{G}^{F}$ are isomorphic,
 their total dimension over $\bK$ is equal to the number of spanning forests in $G$.

Moreover, the dimension of the $k$-th graded component of these algebras  equals
the number of spanning forests $F$ of $G$ with external activity $e(G)-e(F)-k$.
\end{thm}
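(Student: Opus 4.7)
My approach is to sandwich $\dim_{\mathbb{K}}\mathcal{B}_G^F$ and $\dim_{\mathbb{K}}\mathcal{C}_G^F$ between matching upper and lower bounds equal to the spanning forest count, after first checking that the assignment $x_i\mapsto X_i$ induces a surjection $\bar\psi:\mathcal{B}_G^F\twoheadrightarrow\mathcal{C}_G^F$. For the latter, observe that for any $I\subseteq V(G)$,
\begin{equation*}
\sum_{i\in I} X_i \;=\; \sum_{e\in E(G)}\Bigl(\sum_{i\in I}c_{i,e}\Bigr)\phi_e;
\end{equation*}
the coefficient of $\phi_e$ vanishes whenever both endpoints of $e$ lie in $I$ (the signs cancel) or both lie outside $I$ (there is no contribution), so only the $D_I$ cut edges survive, each with coefficient $\pm 1$. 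Then $\bigl(\sum_{i\in I}X_i\bigr)^{D_I+1}$ is a sum of degree-$(D_I+1)$ monomials in $D_I$ generators each killed by squaring, and pigeonhole makes every such monomial vanish. Hence $\psi(p_I^F)=0$ and $\bar\psi$ is well defined and surjective.

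To bound $\dim_{\mathbb{K}}\mathcal{B}_G^F$ from above I would run a deletion-contraction induction on $e(G)$. Fixing an edge $e=(u,v)$ and tracking how the numbers $D_I$ transform under deletion and contraction (deletion decreases $D_I$ by $1$ on cuts separating $u$ from $v$; contraction restricts attention to subsets of $V(G)$ in which $u,v$ lie on the same side), I would produce a short exact sequence of graded vector spaces of the shape
\begin{equation*}
0\longrightarrow \mathcal{B}_{G/e}^F(-1)\longrightarrow \mathcal{B}_G^F \longrightarrow \mathcal{B}_{G\setminus e}^F \longrightarrow 0,
\end{equation*}
or at least an inequality on Hilbert series with the same shape. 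Combined with the spanning-forest recursion $F(G)=F(G\setminus e)+F(G/e)$ and the base case of an edgeless graph where $\mathcal{B}_G^F=\mathbb{K}$, this yields $\dim_{\mathbb{K}}\mathcal{B}_G^F\leq\#\{\text{spanning forests of }G\}$, with the grading shift $(-1)$ carrying the activity refinement.

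For the lower bound on $\dim_{\mathbb{K}}\mathcal{C}_G^F$, I would exploit that in $\Phi_G^F$ the monomials $\prod_{e\in S}\phi_e$, over subsets $S\subseteq E(G)$, form a $\mathbb{K}$-basis. For each spanning forest $F$, I would construct an explicit product $Y_F$ of $|F^-|$ generators $X_i$---using the orientation of each edge to pick the index at its position---whose expansion in the $\phi$-basis has $\prod_{e\in F^-}\phi_e$ as its leading monomial in the lexicographic order induced by the fixed edge ordering. The broken-circuit structure underlying external activity ensures that this assignment is triangular, so the $Y_F$'s are linearly independent; since $\deg Y_F=|F^-|=e(G)-e(F)-\mathrm{act}_G(F)$, the degree-$k$ piece of $\mathcal{C}_G^F$ has dimension at least the number of spanning forests with activity $e(G)-e(F)-k$. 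Together with $\bar\psi$ and the upper bound, every inequality collapses to an equality, giving the stated graded isomorphism.

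The principal obstacle lies in the lower bound: constructing the triangular family $\{Y_F\}$. The challenge is to pick the product of $X_i$'s for each forest so that its leading $\phi$-monomial is exactly $\prod_{e\in F^-}\phi_e$ with nonzero coefficient and so that no cancellation between products for different forests destroys linear independence. The sign conventions from the orientation of edges, together with the observation that externally active edges are precisely the ones that can never be realized as leading terms, have to be tracked carefully; this is where the external activity grading enters essentially rather than as a byproduct.
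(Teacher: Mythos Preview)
The paper does not contain a proof of this theorem; it is quoted from \cite{PSh} (with the vector-configuration refinement in \cite{PSS}), so there is no ``paper's own proof'' to compare against. Judged on its own, your sandwich strategy is essentially the one used in those references: the surjection $\mathcal{B}_G^F\twoheadrightarrow\mathcal{C}_G^F$ is exactly as you describe, and the lower bound on $\dim\mathcal{C}_G^F$ via a triangular family indexed by independent sets with leading monomial $\prod_{e\in F^-}\phi_e$ is the argument of \cite{PSS}. The one substantive deviation is your upper bound: Postnikov--Shapiro do not run deletion--contraction on $\mathcal{B}_G^F$ directly but instead pass to a monomial initial ideal whose standard monomials are indexed by $G$-parking functions, and then biject those with spanning forests. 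Your proposed short exact sequence is plausible but not automatic: the map $\mathcal{B}_{G/e}^F(-1)\to\mathcal{B}_G^F$ (multiplication by $x_u-x_v$, say) is easy to define, but injectivity is nontrivial and typically requires already knowing the dimensions, so at best you get an inequality---which, as you note, still suffices for the sandwich.

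You have correctly located the genuine obstacle. In the triangularity argument, the choice of endpoint $v(e)$ for each $e\in F^-$ cannot be arbitrary: expanding $\prod_{e\in F^-}X_{v(e)}$ produces many monomials, and one must ensure that (i) the monomial $\prod_{e\in F^-}\phi_e$ survives with nonzero coefficient and (ii) every other surviving square-free monomial $\prod_{e\in S}\phi_e$ has $S$ strictly larger than $F^-$ in the chosen order (or at least corresponds to some other forest's $F'^-$ in a triangular way). The standard recipe is to take $v(e)$ to be the endpoint of $e$ determined by the orientation, and to use the no-broken-circuit characterisation of the sets $F^-$ to check that any square-free $S$ arising in the expansion with $S\neq F^-$ contains a broken circuit for the chosen order, hence is the $F'^-$ of no forest smaller than $F$. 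Your sketch gestures at this but does not supply the combinatorial lemma; once it is in place, the rest of your argument goes through.
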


In fact, the second part of Theorem~\ref{thmPSh} claims that the above Hilbert polynomial is a specialization of the Tutte polynomial 
of~$G$.   
\begin{cor}
\label{tutte-t=1}
Given a graph $G$, the Hilbert series of the algebra $\mathcal{C}_G^F$ is given by
$$\HS_{\mathcal{C}_G^F}(t)=T_G\left(1+t,\frac{1}{t}\right)\cdot t^{e(G)-v(G)+c(G)},$$ 
where $c(G)$ is the number of the connected components of  $G$.
\end{cor}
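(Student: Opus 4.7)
The plan is to combine the Hilbert-polynomial statement of Theorem~\ref{thmPSh} with a classical activity expansion of the Tutte polynomial. By Theorem~\ref{thmPSh},
$$\HS_{\mathcal{C}_G^F}(t)=\sum_F t^{e(G)-e(F)-act_G(F)},$$
where $F$ runs over all spanning forests of $G$, so the corollary reduces to the combinatorial identity
$$\sum_F t^{e(G)-e(F)-act_G(F)}=t^{e(G)-v(G)+c(G)}\,T_G(1+t,1/t).$$

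To prove this identity I would use the basis-activity expansion $T_G(x,y)=\sum_T x^{i(T)}y^{act_G(T)}$, summed over bases (maximal spanning forests) $T$ of the graphic matroid, where the \emph{internally active} set $IA(T)$ consists of edges $e\in T$ that are minimal in their fundamental cocircuit $C^*_e(T)$ and $i(T)=|IA(T)|$. Setting $x=1+t$, $y=1/t$, writing $r=v(G)-c(G)$, and expanding $(1+t)^{i(T)}=\sum_{I\subseteq IA(T)}t^{|I|}$ gives
$$t^{e(G)-r}\,T_G(1+t,1/t)=\sum_T\sum_{I\subseteq IA(T)}t^{|I|+e(G)-r-act_G(T)}.$$
I would then match this with the forest sum via the bijection $(T,I)\mapsto F=T\setminus I$, whose inverse is the \emph{greedy completion} of $F$: process the edges of $E(G)\setminus F$ in increasing order and adjoin each one that preserves acyclicity, yielding a basis $T$ and $I=T\setminus F$. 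Under this bijection $|F|=r-|I|$, so modulo the key activity-preservation identity $act_G(F)=act_G(T)$ the two sums coincide term-by-term.

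The main obstacle is verifying that the greedy completion is indeed the inverse of $(T,I)\mapsto T\setminus I$ and that it preserves external activity. Both facts reduce to the classical matroid-duality observation that if $e\in IA(T)$ and $f\in EA(T)$ then $e\notin C_f(T)$, where $C_f(T)$ is the fundamental circuit of $f\notin T$: since $e$ is minimal in $C^*_e(T)$, $f$ is minimal in $C_f(T)$, and $e\in C_f(T)\iff f\in C^*_e(T)$, one would be forced to $e=f$, impossible as $e\in T\not\ni f$. From this, a short fundamental-cycle argument shows both that greedy applied to $F=T\setminus I$ never leaves $T$ and therefore returns exactly $T$ (any edge outside $T$ chosen by greedy would expose an element of $I$ violating its internal activity), and that every $f\in EA(T)$ remains externally active with respect to $F$ since $I\cap C_f(T)\subseteq IA(T)\cap C_f(T)=\emptyset$, giving $act_G(F)=act_G(T)$ and completing the proof.
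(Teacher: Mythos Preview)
Your argument is correct but takes a longer route than the paper. The paper treats the corollary as immediate from Theorem~\ref{thmPSh} together with the forest-activity expansion $T_G(x,y)=\sum_F (x-1)^{c(F)-c(G)}y^{act_G(F)}$ (summed over all spanning forests $F$), which it later quotes as ``well known''; substituting $x=1+t$, $y=1/t$ and using $c(F)-c(G)=v(G)-c(G)-e(F)$ yields the identity in one line. You instead start from the \emph{basis}-activity expansion and manufacture the forest side via the Crapo-type bijection $(T,I)\leftrightarrow T\setminus I$ with $I\subseteq IA(T)$. That bijection is precisely the standard proof of the forest-activity expansion from the basis one, so in effect you are re-deriving the formula the paper takes for granted. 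Your approach is more self-contained; the paper's is shorter because it outsources this step to the literature.

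Two minor points where your sketch is thin but easily repaired: you verify that greedy applied to $T\setminus I$ returns $T$, but not the other half of the bijection (that for an arbitrary forest $F$ its greedy completion $T$ satisfies $T\setminus F\subseteq IA(T)$); and you only argue $EA(T)\subseteq EA(F)$, whereas $act_G(F)=act_G(T)$ also needs the reverse inclusion. Both follow by the same fundamental-circuit/cocircuit reasoning you already set up.
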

Both definitions of algebras are important, and sometimes it is easier to use the first and sometimes the second. For example: if you want to use computer algebra, then the first is better, because it has less relations, but in fact, the first definition is subalgebra of the bigger algebra $\Phi_G^F$.

\bigskip
\noindent The structure of the paper is the following:

In \S\,$2$ we show that the Postnikov-Shapiro algebras counting spanning forests and graphical matroids are in one to one correspondence, see Theorem~\ref{matroid}. It means that these algebras store almost all information about the graphs. %As a consequence, the same is true for $t$-algebras from.

\smallskip
In \S\,$3$ we consider $t$-labelled analouge of algebras, namely we use the condition  $\phi_e^{t+1}=0$ instead $\phi_e^2=0$. The similar properties hold and a new property that we can reconstruct the Tutte polynomial from the Hilbert series of the algebra corresponding to large $t$, see Theorem~\ref{calc}.

\smallskip
In \S\,$4$ we construct a family of algebras corresponding to a given hypergraph, and we 
 present a natural definition of a matroid of a hypergraph such the Hilbert series of algebras is a specialization of the Tutte polynomial corresponding to this matroid. %For usual graphs, our definition gives graphical matroids. For a given hypergraph, we prove that almost all algebras in our family have the same Hilbert series, and this Hilbert series is a specialization of the Tutte polynomial corresponding to the hypergraphical matroid.
%This family of algebras is motivated by Postnikov-Shapiro-Shapiro algebras for vector configurations (see~\cite{PSS}).

\smallskip
In \S\,$5$ we discuss  similar problems for algebras counting spanning trees and formulate Conjecture~\ref{conj}.

\smallskip
{\bf Acknowledgement.} I am grateful to my supervisor Boris~Shapiro for introducing me to this area, for his comments and editorial help with this text.

\bigskip
%%%%%%%%%%%%%%%%%%%%%%%%%%%%%%%%%%%%%%%%%%%%%%%%%%%%%%%%%%
\section{\bf Algebras and  Matroids}
Obviously, the original Postnikov-Shapiro algebra  corresponding to a disconnected graph $G$ is the Cartesian product of the algebras corresponding to the connected components of $G$. In particular, it is also true for $2$-connected components (maximal connected subgraphs such that they remain connected, after removal of any vertex). The same fact is also true for matroids. In this section we prove the following result.

\begin{thm}
\label{matroid}
 Algebras ${\cal B}_{G_1}^F$ and ${\cal B}_{G_2}^F$ for graphs $G_1$ and $G_2$ are isomorphic if and only if 
the graphical matroids of these graphs coincide.

{\rm P.S.} The  algebraic isomorphism can be thought of either as graded or as non-graded, the statement holds in both cases.
\end{thm}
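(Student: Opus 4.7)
The plan is to combine Whitney's classical $2$-isomorphism theorem on the matroid side with presentation-independent algebraic invariants of $\mathcal{B}_G^F$. Whitney's theorem characterizes coincidence of graphical matroids: $M(G_1)=M(G_2)$ iff $G_1$ and $G_2$ are linked by a finite sequence of (a)~identifications/splittings at cut vertices and (b)~Whitney twists at $2$-vertex cuts inside $2$-connected components.

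For the forward direction (same matroid $\Rightarrow$ algebra isomorphism), move (a) is already handled by the opening remark of this section, since $\mathcal{B}_G^F$ factors as a tensor product over the blocks of $G$. For (b), I would prove invariance of the algebra under a single Whitney twist. Let $\{u,v\}$ be a $2$-cut of $G$, decompose $G=G_A\cup G_B$ with $G_A\cap G_B=\{u,v\}$, and let $G'$ be the result of swapping $u,v$ on the $G_B$-side. Identify the edge sets of $G$ and $G'$ so that $\Phi_G^F=\Phi_{G'}^F$, and split each generator as $X_i=Y_i^A+Y_i^B$ according to the side of the cut containing the contributing edge. A direct verification gives $X_i=X_i'$ for $i\notin\{u,v\}$ while $X_u,X_v$ and $X_u',X_v'$ differ only by the exchange $Y_u^B\leftrightarrow Y_v^B$. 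The crux is to realize the partial difference $Y_u^B-Y_v^B$ inside the subalgebra $\mathcal{C}_G^F$ itself, using $\sum_i X_i=0$ and the $2$-connectivity of each half; this would identify $\mathcal{C}_G^F=\mathcal{C}_{G'}^F$ as subalgebras of the common ambient, and Theorem~\ref{thmPSh} then transfers the equality to $\mathcal{B}_G^F\cong\mathcal{B}_{G'}^F$.

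For the backward direction (algebra isomorphism $\Rightarrow$ same matroid), I would work with intrinsic invariants. The degree-$1$ component $A_1$ is canonical, and on it the nilpotency function $\ell\mapsto\mathrm{nil}(\ell):=\min\{k:\ell^k=0\}$ is an algebra invariant. In presentation~(II) one has $\mathrm{nil}\bigl(\sum_{i\in I}x_i\bigr)=D_I+1$ (upper bound from the defining relation, equality from the known Hilbert series), so on the preferred linear forms $\mathrm{nil}$ encodes the entire cut function of $G$, and hence $G$ as a multigraph together with its matroid. The genuine task is to show that the pair $(A_1,\mathrm{nil})$ as an abstract datum depends only on the graphical matroid; circuits should then appear as minimal supports along which $\mathrm{nil}$ attains the appropriate extremal values. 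The ``P.S.'' clause on graded versus non-graded isomorphisms forces these invariants to be accessible without the grading, but the grading of $\mathcal{B}_G^F$ can be recovered intrinsically since the algebra is finite-dimensional local commutative and generated in a single degree.

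The \textbf{main obstacle} is this backward direction: after an abstract isomorphism the preferred family of linear forms $\sum_{i\in I}x_i$ is not visible, so the $D_I$'s cannot be read off from the presentation, and the matroid must be reconstructed from presentation-free data. Most of the effort should go into proving that the nilpotency profile on $A_1$ (or equivalently the Macaulay inverse system of $J_G^F$, in the spirit of the zonotopal treatments of~\cite{ArP} and~\cite{HR}) is a complete invariant of the graphical matroid.
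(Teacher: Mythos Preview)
Your forward direction is essentially the paper's: reduce to Whitney's moves and show that the linear span of the $X_i$ inside the common ambient $\Phi^F$ is unchanged under each move, so $\mathcal{C}_G^F=\mathcal{C}_{G'}^F$ as subalgebras. Your twist sketch is close to Lemma~\ref{forestlem}.

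The gap is the backward direction. You correctly single out nilpotency indices of degree-one elements as the relevant invariant and correctly flag that the preferred forms $\sum_{i\in I}x_i$ are invisible after an abstract isomorphism; but you stop at ``most of the effort should go into proving that the nilpotency profile on $A_1$ is a complete invariant,'' which is the theorem itself. The paper supplies a concrete mechanism, and it is neither a Macaulay inverse-system nor a general zonotopal argument. One selects a basis $\{Y_1,\dots,Y_m\}$ of the algebra satisfying four intrinsic extremal conditions (each $Y_j$ \emph{irreducible}, meaning not expressible as a sum of products of finite-length elements; $\ell$ maximal on generic combinations of basis vectors; $\ell$ not increased by adding reducible elements; $\sum_i\ell(Y_i)$ minimal). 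A key proposition then shows that for any such basis the linear parts $\overline{Y}_i$ form a basis of the degree-one piece, that $\ell(Y)=\ell(\overline{Y})$ on their span, that every edge variable appears in at most two $\overline{Y}_i$ and then with opposite coefficients, and that each $\overline{Y}_i$ is supported in a single $2$-connected block. From the intrinsic numbers $\tfrac{1}{2}\bigl(\ell(Y_i)+\ell(Y_j)-\ell(Y_i+Y_j)\bigr)$ one rebuilds a set system which is then proved to be the cut space of $G$, hence the graphical matroid. Your sentence ``circuits should appear as minimal supports along which $\mathrm{nil}$ attains extremal values'' does not get at any of this. Separately, your claim that the grading is intrinsically recoverable is too quick for the non-graded clause: a lift of $\mathfrak m/\mathfrak m^2$ into $\mathfrak m$ is not canonical, and nilpotency indices genuinely depend on the lift. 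The paper never recovers $A_1$ as a subspace; instead the equality $\ell(Y)=\ell(\overline{Y})$ on the span of the extremal basis is what makes the reconstruction work without any gradedness hypothesis.
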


In the subsequent paper~\cite{NS} filtered algebras are considered, that distinguish graphs. The proof is based on some ideas from this paper and on another trick, that makes a proof easier. 
Here we should proof both sides and in the proof of Theorem~\ref{matroid} we use the following theorem of H.\,Whitney.
\begin{thm}[Whitney's $2$-isomorphism theorem, see~\cite{Wit},~\cite{Oxl}] Let $G_1$ and $G_2$ be two graphs. Then their graphical matroids are isomorphic if and only if $G_1$ can be transformed to a graph, which is isomorphic to $G_2$ by a sequence of operations of vertex identification, cleaving and twisting.
\end{thm}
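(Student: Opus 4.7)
The plan is to prove both directions. For the \emph{if} direction, I would verify that each of the three operations preserves the cycle space, and hence the graphical matroid. Vertex identification of two vertices from distinct connected components creates no new cycle, since every simple cycle through the identified vertex must stay on one side of it; cleaving is its inverse. For a Whitney twist at a $2$-vertex separator $\{u,v\}$, decompose $G = G' \cup G''$ with $V(G') \cap V(G'') = \{u,v\}$ and $E(G') \cap E(G'') = \emptyset$; every cycle either lies entirely on one side or is the edge-disjoint union of a $u$-$v$ path in $G'$ and a $u$-$v$ path in $G''$. Swapping the roles of $u$ and $v$ on one side sends such a $u$-$v$ path to a $v$-$u$ path with exactly the same edge set, so the collection of cycles, viewed as subsets of $E$, is unchanged.

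For the \emph{only if} direction, I would argue by induction on $|E|$ after several reductions. Using the matroid isomorphism $M(G_1) \cong M(G_2)$ to identify edge sets, assume $E(G_1) = E(G_2) = E$ and $M(G_1) = M(G_2) = M$. Isolated vertices (invisible to the matroid) are absorbed using vertex identification. Since the blocks of a graph biject with the connected components of its graphical matroid, repeated cleavings reduce to the case where both $G_1$ and $G_2$ are $2$-connected and $M$ is connected. The base case is when $G_1$ is $3$-connected, where one invokes the classical fact that the graphical matroid of a $3$-connected graph determines it up to isomorphism.

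For the inductive step, suppose $G_1$ admits a $2$-separation $\{u,v\}$ that splits $E$ as $E_1 \sqcup E_2$. This yields an exact matroidal $2$-separation of $M$, so $G_2$ must likewise possess a $2$-vertex separator inducing the same partition of $E$. Each side is a graphic realisation of a smaller matroid, and the inductive hypothesis allows us to match the two sides; a single Whitney twist at $\{u,v\}$ then aligns the two gluings, which a priori could differ by the swap of the attaching vertices. The \textbf{main obstacle} is the $3$-connected base case, where one must recover the vertices of $G$ intrinsically from $M$. The standard approach is to identify vertex-stars as precisely the non-separating cocircuits of $M$ (cocircuits whose deletion leaves the matroid connected), and then reconstruct the edge--vertex incidence from this matroid data. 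A related subtlety is showing that every matroidal $2$-separation is realised by an honest vertex $2$-cut in each graphic model, so that the remaining ambiguity in gluing is captured exactly by Whitney's twist operation.
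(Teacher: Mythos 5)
First, note that the paper does not prove this statement at all: it is quoted as a classical theorem with references to Whitney's original article and to Oxley's book, and is then used as a black box in the proof of Theorem~2.1. So there is no internal proof to compare against; your proposal has to be judged on its own.

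Your ``if'' direction is fine and is the standard argument: each operation preserves the edge sets of cycles. The ``only if'' direction follows the standard textbook route (reduce to blocks, induct on $2$-separations, base the induction on the fact that a $3$-connected graph is determined by its cycle matroid), but as written the inductive step has a genuine gap. When $G_1$ has a $2$-separation at $\{u,v\}$ splitting $E$ into $E_1\sqcup E_2$, the two sides of $G_1$ and of $G_2$ realise the same restricted matroids $M|E_i$, and you invoke the inductive hypothesis to ``match the two sides.'' But a $2$-isomorphism between the standalone sides need not respect the attaching pair: the sequence of Whitney operations it produces can move, split, or re-identify $u$ and $v$ inside a side, after which there is no guarantee the other side can be re-attached so as to yield $G_2$, and a single final twist cannot repair an arbitrary relocation of the separator. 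The restricted matroid $M|E_i$ simply does not remember which two vertices are the gluing pair. The standard fix is to augment each side with a virtual edge joining $u$ and $v$ (i.e., work with the $2$-sum decomposition of $M$), so that the attachment data becomes part of the matroid and the induction hypothesis can be applied to the augmented pieces; you would need to add this, or some equivalent bookkeeping, for the induction to close. Separately, be aware that the $3$-connected base case (recovering vertex stars as non-separating cocircuits) is the real content of the theorem, so citing it as a ``classical fact'' leaves most of the work outside your argument --- which is acceptable for a sketch, but worth stating explicitly.
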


These three operations are defined below.

\noindent 1a) {\it Identification}: Let $v$ and $v'$ be vertices from different connected components of the graph. We modify the graph by identifying $v$ and $v'$ as a new vertex $v''$. 

\noindent 1b) {\it Cleaving} (the inverse of identification): A graph can only be cleft at a cut-vertex.% or at a vertex incident with a loop.

\noindent 2) {\it Twisting}: Suppose that the graph $G$ is obtained from two disjoint graphs $G_1$ and $G_2$ by identifying vertices $u_1$ of $G_1$ and $u_2$ of $G_2$ as the vertex $u$ of $G$ and additionally identifying vertices $v_1$ of $G_1$ and $v_2$ of $G_2$ as the vertex $v$ of $G$. In a twisting of $G$ about $\{u, v\}$, we identify $u_1$ with $v_2$ and $u_2$ with $v_1$ to get a new graph $G'$.

\bigskip
We split our proof of Theorem~\ref{matroid} in two parts presented in \S\,$2.1$ and in \S\,$2.2$.

\subsection{\bf Algebras are isomorphic if their matroids are isomorphic.}

Because algebras ${\cal B}^{F}_G$ and ${\cal C}^{F}_G$ are isomorphic for any graph by Theorem~\ref{thmPSh}, it suffices to  prove Theorem~\ref{matroid} for algebras ${\cal C}^{F}_{G_1}$ and ${\cal C}^{F}_{G_2}$.

\begin{lem} 
\label{forestlem}
If graphs $G$ and $G'$ differ by a sequence of Whitney's deformations, 
then the algebras ${\cal C}^{F}_{G}$ and ${\cal C}^{F}_{G'}$ are isomorphic.
\begin{proof}
It is sufficient to check the claim for each deformation separately.

{\it $1^\circ$ Identification and Cleaving.} We need to prove our fact only for cleaving, because identification is the inverse of cleaving.

In this case algebras doesn't change, because the linear subspace defined by $X_i$ for vertices doesn't change. This holds, because if we split a vertex $k$ into $k'$ and $k''$, then in the new graph, $X_{k'}$ equals to the minus sum of $X_i$ corresponding to the vertices from its component except $k'$ (sum of all $X_{.}$ from one connected component is zero), i.e.
$X_{k'}$ belongs to  the linear space ${<}X_1,\ldots,X_{k-1},X_{k+1},\ldots,X_n{>}$. Similarly $X_{k''}$ belongs to the linear space ${<}X_1,\ldots,X_{k-1},X_{k+1},\ldots,X_n{>}$. Hence, ${<}X_1,\ldots,X_{k'},X_{k''},\ldots,X_n{>}$ is a subspace of the linear space ${<}X_1,\ldots,X_n{>}$. The equation, $X_k=X_{k'}+X_{k''}$ implies that these linear spaces coincide.

\smallskip
{\it $2^\circ$ Whitney's deformation of the second kind.}
 Define the digraph $\overline{G}$ as the orientation of the graph $G$, where each arrow goes to the "smallest" vertex (see Remark~\ref{signs}).
 
 Let us make a twist of the vertices $u$ and $v$. Let $\overline{G}_1$ and $\overline{G}_2$ be the orientations of $G_1$ and $G_2$ corresponding to $\overline{G}$.
  Let $\overline{G'}$ be the orientation of $G'$ corresponding to the gluing $\overline{G}_1$ and $\overline{G}_2$ with reversing each arrow from $\overline{G}_2$.
   Vertex $u'$ in $G'$ is obtained by gluing of $u_1$ and $v_2$; $v'$ is obtained by gluing of $v_1$ and $u_2$.
 
 Let $X_k$, $X_{1,k}$, $X_{2,k}$ and $X'_k$ be the sums  with signs of edges incident to vertex $k$ in graphs $\overline{G}$, $\overline{G}_1$, $\overline{G}_2$ and $\overline{G'}$.
 
  \noindent For a vertex $k$ of $G_1$ except $u_1$ and $v_1$, we get
  
   $X_k=X_{1,k}=X'_k$.
 
 \noindent For a vertex $k$ of $G_2$ except $u_2$ and $v_2$, we get
 
 $X_k=X_{2,k}=-X'_k$, because we reverse the orientation in the second part of twisting. 
 
 \noindent For other vertices we have:
 
  $X_u=X_{1,u_1}+X_{2,u_2};$
 
 $X_v=X_{1,v_1}+X_{2,v_2};$
 
 $X'_{u'}=X_{1,u_1}-X_{2,v_2}=X_u-(X_{2,u_2}+X_{2,v_2});$
 
 $X'_{v'}=X_{1,v_1}-X_{2,u_2}=X_v-(X_{2,u_2}+X_{2,v_2}).$
 
\noindent We know that the sum of variables corresponding to the vertices of any graph is zero, because each edge goes with a plus to one vertex and with a minus to another.  We have 
$$\sum_{k\in G_2}X_{2,k}=0,$$
$$\sum_{k\in G_2\setminus {u_2,v_2}}X_{2,k}+X_{2,u_2}+X_{2,v_2}=0,$$
$$X_{2,u_2}+X_{2,v_2}=-\sum_{k\in G_2\setminus \{u_2,v_2\}}X_{2,k}=-\sum_{k\in G_2\setminus \{u_2,v_2\}}X_{k}.$$

Hence, $X'_{u'}$ and $X'_{v'}$ belong to the linear space generated by $X_k$, where $k\in G$. In other words, the linear space for $\overline{G'}$ is a linear subspace of the space for $\overline{G}$. Similarly we can prove the converse. Then, the linear spaces coincide, and since we have the same relations ($\phi_e^2=0$ for any edge),
 the algebras in these bases coincide.
\end{proof}
\end{lem}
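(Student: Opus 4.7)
The plan is to use a key structural observation: the ambient algebra $\Phi_G^F$ depends only on $E(G)$, being the commutative $\bK$-algebra generated by symbols $\phi_e$ with sole relations $\phi_e^2 = 0$. Since Whitney's three operations preserve the edge set (they only permute vertices or change incidence), $\Phi_G^F$ and $\Phi_{G'}^F$ are canonically identified. Because a subalgebra of a commutative algebra generated by a set of elements depends only on the $\bK$-linear span of that set, the claim reduces to showing that the spans of $\{X_i\}_{i \in V(G)}$ and $\{X'_j\}_{j \in V(G')}$ agree under this identification. I treat the three Whitney operations separately.

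Two identities will drive the argument in every case. First, for each connected component $C$, one has $\sum_{i \in C} X_i = 0$, since each incident edge contributes $+\phi_e$ and $-\phi_e$ exactly once. Second, by Remark~\ref{signs} I have complete freedom to change the orientation used to define the $X_i$'s without affecting the subalgebra $\mathcal{C}_G^F$; this flexibility turns out to be essential in the twisting step.

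For cleaving at a cut-vertex $k$ into $k'$ and $k''$ with associated components $C'$ and $C''$ in $G'$: the edges at $k$ partition disjointly between $k'$ and $k''$, so $X_k = X_{k'} + X_{k''}$ lies in the new span. Conversely, $X_{k'} = -\sum_{i \in C' \setminus \{k'\}} X_i$ by the component-sum identity, and each summand equals the corresponding generator in the uncleaved graph, so $X_{k'}$ lies in the old span; likewise for $X_{k''}$. Identification is merely the inverse move and needs no separate argument.

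The main obstacle is twisting, since a single global vertex order would produce inconsistent signs across the twist. I resolve this by choosing orientations $\overline{G}_1, \overline{G}_2$ on each of the two pieces separately (legitimate by Remark~\ref{signs}) and orienting the twisted graph by reversing every arrow of the $\overline{G}_2$-part. With this choice, $X'_k = \pm X_k$ for all vertices $k$ outside $\{u, v\}$, while for the two twist vertices $u'$ and $v'$ a direct computation shows that $X'_{u'}$ and $X'_{v'}$ differ from $X_u, X_v$ by the single quantity $X_{2,u_2} + X_{2,v_2}$. This correction is rewritten via the component-sum identity applied inside $G_2$ as $-\sum_{k \in G_2 \setminus \{u_2, v_2\}} X_{2,k}$, which lies in the old span since $X_{2,k} = \pm X_k$ for such $k$. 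The symmetric computation yields the reverse inclusion, so the two linear spans coincide and the subalgebras agree.
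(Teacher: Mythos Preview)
Your proof is correct and follows essentially the same approach as the paper: both reduce to showing equality of the linear spans of the $X_i$'s inside the common ambient algebra $\Phi_G^F$, handle cleaving via $X_k = X_{k'} + X_{k''}$ together with the component-sum identity, and handle twisting by reversing the orientation on the $G_2$-part so that the correction term $X_{2,u_2} + X_{2,v_2}$ can be rewritten as a sum of old generators. The arguments are effectively identical.
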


We have proved the first part of Theorem~\ref{matroid}, because if the corresponding graphical matroids are isomorphic, then there exists such a sequence of Whitney's operations.

\begin{cor}
The algebra corresponding to a graph $G$ is the Cartesian product of the algebras corresponding to the $2$-connected components of~$G$.
\end{cor}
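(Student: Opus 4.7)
The plan is to reduce $G$ to a disjoint union of its blocks by repeated cleaving and then invoke the product decomposition for disconnected graphs. Concretely, suppose some connected component of $G$ contains a cut-vertex $v$. Cleaving $G$ at $v$ produces a graph $G'$ with strictly more connected components than $G$, and Lemma~\ref{forestlem} (in its identification/cleaving case) gives ${\cal C}_G^F \cong {\cal C}_{G'}^F$. Iterating this procedure, after finitely many cleavings I reach a graph $\hat G$ every connected component of which is $2$-connected. By the classical block decomposition, the connected components of $\hat G$ are precisely the $2$-connected components (blocks) $B_1,\ldots,B_k$ of~$G$, each cut-vertex of $G$ having been split into one copy per incident block.

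Since the original Postnikov-Shapiro algebra of a disconnected graph is the Cartesian product of the algebras of its connected components (noted at the start of this section), I obtain
$$
{\cal C}_G^F \;\cong\; {\cal C}_{\hat G}^F \;\cong\; \prod_{i=1}^{k} {\cal C}_{B_i}^F.
$$
Combining this with the isomorphism ${\cal C}_G^F \cong {\cal B}_G^F$ from Theorem~\ref{thmPSh} yields the statement of the corollary for ${\cal B}^F$ as well.

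The only real ingredient is Lemma~\ref{forestlem}, which is already established. The remaining piece is the purely graph-theoretic observation that iterated cleaving at cut-vertices terminates exactly at the block decomposition of $G$, which is standard (block-cut tree), so I do not expect any serious obstacle here. If one wishes to avoid even this mild appeal to block theory, one can argue by induction on the number of edges: if $G$ is already $2$-connected the statement is trivial, and otherwise picking a cut-vertex $v$ and cleaving it gives a graph with fewer blocks in each component, to which the induction hypothesis applies.
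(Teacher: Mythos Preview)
Your proof is correct and is precisely the intended argument. The paper states the corollary without proof, relying on the cleaving case of Lemma~\ref{forestlem} together with the product decomposition for disconnected graphs noted at the opening of the section --- exactly the two ingredients you combine.
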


\smallskip
%%%%%%%%%%%%%%%%%%%%%%%%%%%%%%%%%%%%%%%%%%%%%%%%%%%%%%%%%%
\subsection{\bf Reconstruction of the matroid}

\begin{lem}
It is possible to reconstruct a matroid of a graph $G$ from the algebra 
${\cal C}_G^F$.

\begin{remark} We assume that we only know ${\cal C}_G^F$ as an algebra. I.e.  we assume that we do not know the basis corresponding to the vertices of $G$, and that we have no information about the graded components of~${\cal C}_G^F$.
\end{remark}
\begin{proof}
For an element $Y\in {\cal C}_G^F$, we define its length $\ell(Y)$ as the minimal number such that $Y^{\ell+1}$ is zero (the length can be infinite).

We call an element $Y\in {\cal C}_G^F$ irreducible if there is no representation $Y=\sum_i Z_{2i-1}Z_{2i}$ such that $\ell(Z_j)$ is finite for any $j$.

Consider a basis $\{Y_1,...,Y_m\}$ of the algebra ${\cal C}_G^F$ with the following conditions:
\begin{itemize}

\item Each $Y_j$ is irreducible; 

\item For any $k\leq m$, different nonzero numbers $ r_1,\ldots,r_k \in \bK$, any $ r'_1,\ldots,r'_k \in \bK$ and different $i_1,\ldots,i_k\in [m]$, we have$$\ell(r_1Y_{i_1}+\cdots+r_kY_{i_k})\geq \ell(r'_1Y_{i_1}+\cdots+r'_kY_{i_k});$$
\item For any  linear combination $Y$ of $\{Y_1,...,Y_m\}$ and a reducible~$Z$,
$$\ell(Y)\leq \ell(Y+Z);$$
\item  $\sum_{i} \ell(Y_i)$ is minimal.
\end{itemize}

Such a basis of ${\cal C}_G^F$ always exists. For example, the basis $X_1,\ldots, X_n$ (corresponding to the vertices) satisfies the first three conditions. However, the sum of lengths of $X_i$ is not minimal.

For an element $Y\in {\cal C}_G^F$, we define its linear part as $\lY$ and its remainder as $\widehat {Y}$, where $Y=\lY+\widehat {Y}$, $\overline {Y}$ belongs to the $1$-graded component of ${\cal C}_G^F$  and  $\widehat {Y}$ belongs to the linear span of the other graded components. 
Observe that we do not know this decomposition explicitly, because we do not know the $1$-graded component.  
We say that an edge $e$ belongs to $\lY$ if $\lY$  includes the variable $\phi_e$ corresponding to the edge $e$ with a nonzero coefficient.
\begin{prop} 
\label{basis} A basis $\{Y_1,...,Y_m\}$ as above satisfies additionally the following conditions:
\begin{enumerate}
\item The set $\{\lY_1,...,\lY_m\}$ is a basis;

\item For any linear combination $Y$ of $\{Y_1,...,Y_m\},$ 
$$\ell(Y)= \ell(\overline{Y});$$

\item Each edge belongs to one or two $\lY_i$ and $\lY_j$. If it belongs to two, then with the opposite coefficients; 

\item Each $\lY_i$ has edges only from one $2$-connected component.

\end{enumerate}
\begin{proof}
{\bf (1).}
We know that $\{Y_1,...,Y_m\}$ is a basis. 
Hence, the $1$-graded component of ${\cal C}_G^F$ coincides with a linear span of ${<}\lY_1,...,\lY_m {>}$. Then, ${<}\lY_1,...,\lY_m {>}$ is also a basis.

{\bf (2).} For any $i\in [m]$, the part $\widehat{Y_i}$ is reducible, otherwise $\ell (Y_i)$ is infinite, and the sum in the last condition is infinite. However for the basis corresponding to the vertices this sum is finite. Then for any linear combination $Y$ of $\{Y_1,...,Y_m\}$, we have $\ell(Y)\leq \ell(\overline{Y})<\infty$ (by third condition of choice). However, it is clear that $\ell(Y)\geq \ell(\overline{Y})$. Hence, $\ell(Y) = \ell(\overline{Y})$ for any $Y$ from the linear space  ${<}Y_1,...,Y_m{>}$.

\smallskip
{\bf (3).} Obviously, each edge belongs to at least one $\lY_{j}$, because any edge belongs to some $X_i$ and this $X_i$ is a linear combination  of $\lY_1,...,\lY_m$.

Assume that there is an edge $e$, which belongs to $\lY_{i_1}$, $\lY_{i_2}$ and $\lY_{i_3}$. Then there are different nonzero numbers $r_1$, $r_2$ and $r_3$ such that, for $Y=r_1Y_1+r_2Y_2+r_3Y_3$, $\overline{Y}$ without $e$. Then $\ell(\overline{Y})$ is at most the number of all edges in  $\lY_{i_1}$, $\lY_{i_2}$ and $\lY_{i_3}$ minus one.

Consider $r'_1$, $r'_2$ and $r'_3$ in general position, then, for $Y'=r'_1Y_1+r'_2Y_2+r'_3Y_3$, $\ell(\overline{Y'})$ is the number of all edges in  $\lY_{i_1}$, $\lY_{i_2}$ and $\lY_{i_3}$. We have
$$\ell(\overline{r_1Y_1+r_2Y_2+r_3Y_3})<\ell(\overline{r'_1Y_1+r'_2Y_2+r'_3Y_3}).$$
Using {(2)} we also have
$$\ell({r_1Y_1+r_2Y_2+r_3Y_3})<\ell({r'_1Y_1+r'_2Y_2+r'_3Y_3}),$$
which contradicts our choice of the basis.

\smallskip
We proved the first part of condition~(3); the proof of the second part is the same, but only for two different $r_1$ and $r_2$.

\smallskip
{\bf (4).}
Assume the opposite, i.e. that there is $\lY_{i}$ which has edges belonging to two different $2$-connected components.

We know that the algebra is the Cartesian product of the subalgebras corresponding to $2$-connected components. Then there are $Z_1$ and $Z_2$ from the $1$-graded component, such that $\lY_{i}=Z_1+Z_2$ and $\ell(Z_1),\ell(Z_2)<\ell(\lY_{i})$.(For example, $Z_1$ is a part corresponding to some $2$-connected component and $Z_2$ is another part). 

Because $Z_1$ and $Z_2$ belong to the linear space ${<}\lY_1,...,\lY_m{>}$, we can change  $\lY_{i}$ to $Z_1$ or $Z_2$ in the basis $\{\lY_1,...,\lY_m\}$. (Indeed if we can not do it, then $Z_1$ and $Z_2$ belong to ${<}\lY_{1},\ldots \lY_{{i-1}},\lY_{{i-1}}, \ldots\lY_{m}{>}$. 
Therefore $\lY_{{i}}=Z_1+Z_2$ also belongs to the latter space). Let us $\{\lY_{1},\ldots \lY_{{i-1}}, Z_1, \lY_{{i-1}}, \ldots\lY_{m}\}$ is also a basis.

We have a new basis $\{\lY_{1},\ldots \lY_{{i-1}}, Z_1, \lY_{{i-1}}, \ldots\lY_{m}\}$, whose sum of lengths is less than the sum of lengths of $\{\lY_{1},\ldots ,\lY_{m}\}$, which is equal to the sum of lengths of $\{{Y_1},\ldots ,{Y_m}\}$. And for this basis, the first three conditions of a choice of basis  hold, and the sum of lengths is smaller. Then we should choose the basis $\{\lY_{1},\ldots \lY_{{i-1}}, Z_1, \lY_{{i-1}}, \ldots\lY_{m}\}$ instead of $\{{Y_1},\ldots ,{Y_m}\}$. Contradiction. 
 \end{proof}
\end{prop}

Let us now construct the cut space of $G$. This will finish the proof, because we can define the graphical matroid in terms of the cut space of a graph. By a cut we mean a set $C$ of edges such that the subgraph $G\setminus C$ has more connected components than $G$. By an elementary cut we mean a minimal cut, i.e. a cut, whose arbitrary subset is not a cut.

The sum $\overline{\sum 2^i Y_i}=\sum 2^i\overline{ Y_i}$ has each edge with a nonzero coefficient by~{(2)} of Proposition~\ref{basis}. Hence, 
$$e(G)=\ell  \left( \sum_{i=1}^m 2^i \lY_{i} \right) =\ell \left(\sum_{i=1}^m 2^i Y_i\right).$$
Therefore we know the number of edges in the graph.

Consider the set $\tau=\{\psi_1,\ldots,\psi_{e(G)}\} $ consisting of $e(G)$ elements and a family of subsets $K_1,\ldots,K_m$
constructed by the following rules.
\begin{itemize}
\item For each pair $i$ and $j$, we prescribe $\frac{\ell(Y_i)+ \ell(Y_j)-\ell(Y_i+Y_j)}{2}$ own elements from $\tau$ and add them to both $Z_i$ and $Z_j$; 

\item For every $i$, we choose $\ell(Y_i)-\sum_{j\neq i} (\frac{\ell(Y_i)+ \ell(Y_j)-\ell(Y_i+Y_j)}{2})$ own elements from $\tau$ and add them to $Z_i$.

\end{itemize}

In fact, for any edge  $e$ from $G$, we choose the corresponding element $\psi_{k(e)}$ and add it to $Z_i$ if and only if $e$ belongs to $\lY_{i}$. 

Consider the space $\Gamma$ of subsets in $\tau$ with the operation $\Delta$ (symmetric difference) generated by $Z_1,\ldots,Z_m$. We want to prove that $\Gamma$ is isomorphic to the cut space of $G$.

\smallskip
Let $C$ be an elementary cut of $G$. 
Let  $X_C$ be the sum of $X_i$ corresponding to the vertices, which belong to some new component of $G\setminus C$ (hence, $X_C$ is in $1$-graded component).
Then $X_C$ has an edge with a nonzero coefficient if and only if the edge belongs to~$C$. Consider the minimal $t$ such that there is a linear combination $X_C=a_1\lY_{{i_1}}+\ldots+a_t\lY_{{i_t}}$; consider the sum $X'_C=\lY_{{i_1}}+\ldots+\lY_{{i_t}}$. Obviously, $X'_C$  is nonzero and has edges only from the cut $C$, because an edge belongs to the sum $X'_C$ if and only if it is exactly in one of $\lY_{{i_1}},\ldots,\lY_{{i_t}}$. 

Assume that $X'_C$ has not all edges from $X_C$. Let $C'$ be a subset of the edges of $C$ belonging to $X'_C$. The set of edges $C'$ is not a cut of $G$, because $C$ is an elementary cut. Hence, for any edge $e\in C'$, there is a path $e_1,\ldots,e_k$ in $G\setminus C'$, such that $e,e_1,\ldots,e_k$ is a simple cycle.  Let the edge $e$ connect vertices $b_k$ and $b_0$, and the edge $e_i$ connect $b_{i-1}$ and $b_i$ for any $i\in[k]$. 
Because $X'_C$ belongs to the $1$-graded component,  $X'_C=\sum_{i=1}^n\widehat{a_i}X_i$, where $\widehat{a_i}\in\bK$ and $X_i$ are the elements corresponding to the vertices of $G$. 
For $i\in [k]$, the edge $e_i$ does not belong to $X'_C$, however variable $\phi_{e_i}$ belongs only to $X_{b_{i-1}}$ and $X_{b_{i}}$ from $\{X_1,\ldots,X_n\}$. 
Furthermore it belongs to one of $\{X_{b_{i-1}},X_{b_{i}}\}$ with coefficient $1$ and to another with $-1$, hence, $\widehat{a}_{b_{i-1}}=\widehat{a}_{b_{i}}$. Then, we also have $\widehat{a}_{b_0}=\widehat{a}_{b_k}$. 
Hence the variable $\phi_e$ belongs to $X'_C$ with a zero coefficient. Contradiction.

\smallskip
We concluded that a subset of $\tau$ corresponding to an elementary cut belongs to the space $\Gamma$.  To finish the proof, we need to show, that if a subset of $\tau$ belongs to $\Gamma$, then it either corresponds to a cut or to the empty set.

Assume the contrary, i.e. assume that there is $Z_{i_1}\Delta Z_{i_2}\Delta\cdots Z_{i_s}$ is not a cut. Let $C$ be a set of edges from 
$\lY_{{i_1}}+\lY_{{i_2}}+\cdots +\lY_{{i_s}}$, then $C$ is not a cut in $G$. By Proposition~\ref{basis} we can split the summation into the summations inside individual connected components (and even inside $2$-connected component). Then, for any connected component it is not a cut.

Let $\lY_{{i_{j_1}}}+\lY_{{i_{j_2}}}+\cdots +\lY_{{i_{j_r}}}$ corresponds to a connected component $G'$, and has the set of edges $C'$. 
Then $$\lY_{{i_{j_1}}}+\lY_{{i_{j_2}}}+\cdots +\lY_{{i_{j_r}}}=\sum_{v\in V(G')}a_vX_v.$$
  We know that $G'\setminus C'$ is connected. 
  Therefore there is a spanning tree $T$ in $G'\setminus C'$. 
  For any edge ${v_i}{v_j}$ from $T$, we have $a_{v_i}=a_{v_j}$; otherwise the edge ${v_i}{v_j}$ belongs to $\sum_{v\in V(G')}a_vX_v$ with a nonzero coefficient. 
  Since $T$ is a spanning tree of $G'$,  all coefficients $a_v$ are the same.
   Thus $\sum_{v\in V(G')}a_vX_v=a(\sum_{v\in V(G')}X_v)=0$; the last sum is zero, because the sum of variables corresponding to vertices from a connected component is zero. 
   Then  we also have $\lY_{{i_1}}+\lY_{{i_2}}+\cdots +\lY_{{i_s}}=0$, hence, $Z_{i_1}\Delta Z_{i_2}\Delta\cdots Z_{i_s}$ is the empty set. 
  
  Therefore the space $\Gamma$ is isomorphic to the cut space of $G$, i.e. there is a unique graphical matroid corresponding to ${\cal C}^F_G$. 
  \end{proof}
\end{lem}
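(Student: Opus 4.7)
The plan is to reconstruct the cut space of $G$ from purely algebraic invariants of $\mathcal{C}_G^F$, since the cut space determines the graphical matroid. The key tools will be a nilpotency-index function $\ell(\cdot)$ and a notion of irreducibility, together with a carefully chosen algebraic basis whose linear parts mimic the vertex generators $X_1,\ldots,X_n$ up to the redundancy afforded by the product decomposition over $2$-connected components and the relation $\sum_v X_v = 0$ in each connected component.

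First I would introduce the length $\ell(Y)$ of $Y\in\mathcal{C}_G^F$ as the maximal $k$ with $Y^k\neq 0$, and call $Y$ irreducible if it admits no expression $\sum_j Z_{2j-1}Z_{2j}$ with all factors of finite length. Both notions are visibly algebra invariants. Next I would isolate a basis $\{Y_1,\ldots,Y_m\}$ characterized by (i) irreducibility of each $Y_i$, (ii) genericity of linear combinations in the length sense, (iii) length-stability under addition of reducible elements, and (iv) minimality of $\sum_i\ell(Y_i)$. Existence is guaranteed because the vertex basis (after dropping one $X_i$ per connected component) already satisfies the first three conditions and has finite total length.

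The technical heart is Proposition~\ref{basis}, which I would prove by a sequence of contradictions against conditions (i)--(iv): the linear parts $\overline{Y_i}$ must form a basis of the degree-$1$ component; one must have $\ell(Y)=\ell(\overline{Y})$ for every $Y$ in the span; each edge appears in at most two of the $\overline{Y_i}$, with opposite coefficients when it appears twice; and each $\overline{Y_i}$ is supported in a single $2$-connected component. The key dichotomy is: if an edge appeared in three basis elements, a clever linear combination would cancel it and yield a strict length drop violating condition (ii); and if a single $\overline{Y_i}$ straddled two $2$-connected components, the Cartesian product decomposition of $\mathcal{C}_G^F$ would split it into pieces of strictly smaller length, violating condition (iv).

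With the basis in hand, I would construct the cut space abstractly: the values $\ell(Y_i)$ and $\ell(Y_i+Y_j)$, together with the $\phi_e^2=0$ relations, know the sizes of the edge supports of the $\overline{Y_i}$ and of their pairwise intersections, so one can build an abstract edge set $\tau$ with subsets $Z_i\subseteq\tau$ indicating which abstract edge lies in which $\overline{Y_i}$. One then identifies the $\mathbb{F}_2$-span $\Gamma$ of the $Z_i$ under symmetric difference with the cut space of $G$. The hard direction, and the main obstacle, is showing that every nontrivial element of $\Gamma$ corresponds to an actual cut: one must rule out that a combination $\sum_j\overline{Y_{i_j}} = \sum_v a_v X_v$ supports a non-cut edge set $C'$ inside some $2$-connected component $G'$, which is done by noting that a spanning tree of $G'\setminus C'$ forces all $a_v$ equal and hence the combination to vanish. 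The reverse inclusion follows by expanding the indicator $X_C=\sum_{v\in S}X_v$ of any elementary cut in the $\overline{Y_i}$-basis and using a path-through-the-complement argument to exclude spurious edges from the resulting symmetric difference. Once both inclusions are established, $\Gamma$ equals the cut space and the graphical matroid is recovered.
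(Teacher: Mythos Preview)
Your proposal is correct and follows essentially the same route as the paper: you introduce the length and irreducibility invariants, select the basis with the same four optimality conditions, establish the four properties of Proposition~\ref{basis} by the same contradictions, rebuild an abstract edge set from the values $\ell(Y_i)$ and $\ell(Y_i+Y_j)$, and then match the $\mathbb{F}_2$-span $\Gamma$ with the cut space via the spanning-tree argument in one direction and the path-through-the-complement argument in the other. Your remark that one should drop one $X_i$ per connected component to get an honest basis is in fact more careful than the paper's phrasing.
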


\bigskip
%%%%%%%%%%%%%%%%%%%%%%%%%%%%%%%%%%%%%%%%%%%%%%%%%%%%%%%%%%
\section{\bf Algebras associated with $t$-labelled forests}

In this section we substitute the square-free algebra $\Phi_G^F$ for the $(t+1)$-free algebra $\Phi_G^{F_t}$.  \begin{Not}
\label{tforest}
 Take an undirected graph $G$ on $n$ vertices. Let $t>0$ be a positive integer.

\emph{(I)} Let $\Phi_{G}^{F_t}$ be a commutative algebra over $\bK$ generated by $\{ \phi_e: \ e\in E(G) \}$ satisfying the relations $\phi_e^{t+1}=0$, for any $e\in E(G)$.

Fix any linear order on the vertices of $G$. For $i=1,\ldots,n$, set 
$$X_i=\sum_{e\in G} c_{i,e} \phi_e,$$
 where $c_{i,e}$ as in the notation~\ref{forest}.
Denote by ${\cal C}_{G}^{F_t}$ the subalgebra of $\Phi_{G}^{F_t}$ generated by $X_1,\ldots ,X_n$.

\emph{(II)}   Consider the ideal $J_{G}^{F_t}$ in the ring $\bK[x_1,\cdots,x_n]$ generated by
$$p_I^{F_t}=\left(\sum_{i\in I} x_i\right)^{ tD_I+1},$$
where $I$ ranges over all nonempty subsets of vertices, and $D_I$ is the total number of edges between vertices in $I$ and vertices outside~$I$.  Define the algebra   ${\cal B}_{G}^{F_t}$ as the quotient $\bK[x_1,\dots,x_n]/ J_{G}^{F_t}.$

\end{Not}
It enumerates the so-called $t$-labelled spanning forests.

%%%%%%%%%%%%%%%%%%%%%%%%%%%%%%%%%%%%%%%%%%%%%%%%%%%%%%%%%%%%%%%%%%%%%%%%%%%

Consider a finite labelling set $\{1,2,\ldots,t\}$ containing $t$ different labels.%s; each label being to a number from $1$ to $t$. 

\begin{defin}
A spanning forest of a graph $G$ with a label from $\{1,2,\ldots,t\}$  on each edge
is called a $t$-labelled forest.
The weight of a $t$-labelled forest $F$, denoted by $\omega(F)$, is the sum of the labels of all its edges. 
\end{defin}

\begin{thm} 
For any graph $G$ and a positive integer $t$, algebras  ${\cal B}_{G}^{F_t}$ and ${\cal C}_{G}^{F_t}$ are isomorphic.
 Their total dimension over $\bK$ is equal to the number of $t$-labelled forests in $G$.

\smallskip
The dimension of the $k$-th graded component of the algebra ${\cal B}_{G}^{F_t}$ is equal to
the number of $t$-labelled forests $F$ of $G$ with the weight $t\cdot (e(G)-act_G(F)) - k$.

\label{tcol}
\begin{proof}
Denote by $\widehat{G}$ the graph on $n$ vertices and with $t\cdot e(G)$ edges such that each edge of $G$ corresponds to its $t$ clones 
in the graph $\widehat{G}$. In other words, each edge of $G$ is substituted by its $t$ copies with labels $1,2,\ldots,t$. For each edge $e\in E(G)$, its clones $e_1, \dots, e_t\in E(\widehat{G})$  are ordered according to their numbers; 
clones of different edges have the same linear order as the original edges. Thus we obtain a linear order of the edges of~$\widehat{G}$.

Consider the following bijection between $t$-labelled forests in $G$ and usual forests in $\widehat{G}$:
each $t$-labelled forest $F\subset G$ corresponds to the forest $F'\subset  \widehat{G}$, such that
for each edge $e\in E(F)$, the forest $F'$ has the clone of the edge $e$ whose number is identical to the label of the edge $e$ in the forest $F$.
 
Obviously,
$$act_{\widehat{G}}(F')=t\cdot act_G(F)+\omega(F)-e(F),$$
and $e(\widehat{G})=t\cdot e(G)$. Since ${\cal B}_{G}^{F_t}$ and ${\cal B}_{\widehat{G}}^F$ are the same, the 
Hilbert series of the algebra ${\cal B}_{G}^{F_t}$ coincides with the Hilbert series of the algebra ${\cal B}_{\widehat{G}}^F$, 
which settles the second part of Theorem~\ref{tcol}.

To prove the first part of this Theorem, observe that ${\cal B}_{G}^{F_t}$ and ${\cal B}_{\widehat{G}}^F$ are isomorphic, and algebras 
${\cal C}_{\widehat{G}}^{F}$ and ${\cal B}_{\widehat{G}}^F$ are isomorphic. Thus we must show that algebras  ${\cal C}_{\widehat{G}}^{F}$ and ${\cal C}_{G}^{F_t}$ are isomorphic.
 This is indeed true, because for every edge $e\in E(G)$, the elements $\phi_e,\dots ,\phi_e^{t}$ are linearly independent in the algebra $\Phi_{G}^{F_t}$ with coefficients containing no $\phi_e$. 
Also elements $(\phi_{e_1}+\dots+\phi_{e_t}),\dots ,(\phi_{e_1}+\dots+\phi_{e_t})^{t}$  are linearly independent in the algebra $\Phi_{\widehat{G}}^{F}$ with coefficients containing no $\phi_{e_1},\dots,\phi_{e_t}$, and  $(\phi_{e_1}+\dots+\phi_{e_t})^{t+1}=0$. Moreover  elements $\phi_{e_i}$ only occur in the sum $(\phi_{e_1}+\dots+\phi_{e_t})$ in the algebra $\Phi_{\widehat{G}}^{F}$.
\end{proof}
\end{thm}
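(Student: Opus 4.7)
The plan is to reduce Theorem~\ref{tcol} to the $t=1$ case (Theorem~\ref{thmPSh}) by replacing every edge of $G$ with a bundle of $t$ parallel copies. I would let $\widehat{G}$ be the graph on the same vertex set obtained by cloning each edge $e\in E(G)$ into $t$ edges $e_1,\dots,e_t$, and fix a linear order on $E(\widehat{G})$ that refines the order on $E(G)$: clones of the same edge appear consecutively as $e_1<\cdots<e_t$, and distinct bundles inherit the order from $E(G)$.

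The first step is to identify the algebras with their counterparts for $\widehat{G}$. For the quotients, ${\cal B}_G^{F_t}$ and ${\cal B}_{\widehat{G}}^{F}$ are literally the same quotient of $\bK[x_1,\dots,x_n]$: for any $I$, the cut size $D_I^{\widehat{G}}$ equals $t\cdot D_I^G$, so the exponent $tD_I^G+1$ in one matches $D_I^{\widehat{G}}+1$ in the other. For the subalgebras, I would define a homomorphism $\Phi_G^{F_t}\to\Phi_{\widehat{G}}^{F}$ by $\phi_e\mapsto \phi_{e_1}+\cdots+\phi_{e_t}$. Since each $\phi_{e_i}^2=0$, a pigeonhole argument gives $(\phi_{e_1}+\cdots+\phi_{e_t})^{t+1}=0$; moreover the first $t$ powers of this sum are nonzero and linearly independent of expressions in the other variables (the $k$-th power is $k!$ times the $k$-th elementary symmetric polynomial in $\phi_{e_1},\dots,\phi_{e_t}$). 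Consequently the map restricts to an isomorphism ${\cal C}_G^{F_t}\cong {\cal C}_{\widehat{G}}^{F}$, because the generators $X_i$ on both sides correspond.

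The second step is a bijection between $t$-labelled spanning forests of $G$ and ordinary spanning forests of $\widehat{G}$, sending a labelled forest $(F,\ell)$ with $\ell:E(F)\to\{1,\dots,t\}$ to $F'=\{e_{\ell(e)}:e\in E(F)\}$. The heart of the argument is the activity identity $act_{\widehat{G}}(F')=t\cdot act_G(F)+\omega(F)-e(F)$. I would check this case by case on the edges of $\widehat{G}\setminus F'$: for a clone $e_i$ of an edge $e\notin F$ that is externally active in $G$, the unique cycle in $F'+e_i$ mirrors the one in $F+e$ with the chosen clones substituted for the edges of $F$, and by the refined order $e_i$ remains the minimum, so all $t$ clones contribute; for a clone $e_i$ of $e\notin F$ that is not externally active in $G$, the minimum of the mirrored cycle already lies in a smaller bundle and $e_i$ does not contribute; for $e\in F$ with label $\ell$, the $\ell-1$ clones $e_1,\dots,e_{\ell-1}$ form 2-cycles with $e_\ell$ on which they are minimal, giving exactly $\sum_{e\in F}(\ell(e)-1)=\omega(F)-e(F)$.

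Combining this bijection and identity with Theorem~\ref{thmPSh} applied to $\widehat{G}$ and the equality $e(\widehat{G})=t\cdot e(G)$, a short arithmetic shows that the $k$-th graded component of ${\cal B}_G^{F_t}$ has dimension equal to the number of $t$-labelled forests $F$ with $\omega(F)=t(e(G)-act_G(F))-k$, yielding both parts of the theorem. The main obstacle will be the activity identity; once the refined linear order on $E(\widehat{G})$ is pinned down, however, the case analysis becomes routine.
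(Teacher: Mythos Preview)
Your proposal is correct and follows essentially the same route as the paper: clone each edge $t$ times to form $\widehat{G}$, identify ${\cal B}_G^{F_t}={\cal B}_{\widehat{G}}^F$ via $D_I^{\widehat{G}}=tD_I^G$, realize ${\cal C}_G^{F_t}\cong{\cal C}_{\widehat{G}}^F$ through $\phi_e\mapsto\phi_{e_1}+\cdots+\phi_{e_t}$, and transfer Theorem~\ref{thmPSh} along the labelled-forest bijection and the activity identity. If anything, you supply more detail than the paper does---it simply asserts the activity formula as ``obvious,'' whereas your case split (externally active $e\notin F$, inactive $e\notin F$, and lower clones of $e\in F$) is exactly the computation that justifies it.
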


\bigskip

In fact Hilbert Series of $\cal{B}_G^{F_t}$ was calculated in papers~\cite{ArP} and~\cite{Lenz}. Furthermore, the Hilbert Series was computed
for $\widehat{G}$, where each edge is replaced by its own number of edges. The  Hilbert series is a specialization of the multivariate Tutte polynomial (see definition in~\cite{S}). 
When "$t$" is the same for every edge, the multivariate Tutte polynomial is calculated from the usual Tutte polynomial. So in our case the Hilbert Series of $\cal{B}_G^{F_t}$ is a specialization of the Tutte polynomial of $G$.
  
\begin{thm} 
\label{tutt}
The dimension of the $k$-th graded component of ${\cal B}_{G}^{F_t}$ is equal to the coefficient of the monomial $y^{t\cdot e(G) -v(G)+c(G) -k}$ 
in the polynomial $$\left( \frac{y^t-1}{y-1} \right)^{v(G)-c(G)} \cdot T_G \left( \frac{y^{t+1}-1}{y^{t+1}-y}, y^t \right).$$

\end{thm}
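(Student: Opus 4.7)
The plan is to chain Theorem~\ref{tcol} with Corollary~\ref{tutte-t=1} applied to the auxiliary graph $\widehat G$ of the proof of Theorem~\ref{tcol}, and then to re-express $T_{\widehat G}$ in terms of $T_G$ via a parallel-extension identity.

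\textbf{Step 1 (reduction to $\widehat G$).} In the proof of Theorem~\ref{tcol} it is established that ${\cal B}^{F_t}_G \cong {\cal B}^{F}_{\widehat G}$, where $\widehat G$ is obtained from $G$ by replacing each edge with $t$ parallel copies. Since $e(\widehat G)=t\,e(G)$, $v(\widehat G)=v(G)$, and $c(\widehat G)=c(G)$, Corollary~\ref{tutte-t=1} applied to $\widehat G$ yields
$$\HS_{{\cal B}^{F_t}_G}(y)=T_{\widehat G}\!\left(1+y,\tfrac{1}{y}\right)\,y^{te(G)-v(G)+c(G)}.$$

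\textbf{Step 2 (parallel-extension identity).} In the corank--nullity expansion $T_{\widehat G}(x,y)=\sum_{A'\subseteq E(\widehat G)}(x-1)^{r(E)-r(A')}(y-1)^{|A'|-r(A')}$, I would group subsets $A'$ by their ``underlying'' edge set $A\subseteq E(G)$ (the edges of $G$ at least one of whose copies lies in $A'$). Parallel copies do not raise the matroid rank, so $r_{\widehat G}(A')=r_G(A)$, and for each $e\in A$ the sum over its nonempty multiplicities factors as $\sum_{n=1}^t\binom{t}{n}(y-1)^n=y^t-1$. Collecting these observations and normalizing to $T_G$, I obtain
$$T_{\widehat G}(x,y)=\Bigl(\tfrac{y^t-1}{y-1}\Bigr)^{v(G)-c(G)} T_G\!\left(1+\tfrac{(x-1)(y-1)}{y^t-1},\,y^t\right).$$
A sanity check on the single-edge graph ($T_G=x$, $v-c=1$) gives $T_{\widehat G}(x,y)=(x-1)+\tfrac{y^t-1}{y-1}$, which agrees with the direct computation for $t$ parallel edges.

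\textbf{Step 3 (substitution and extraction).} Let $F(y)$ denote the polynomial in the theorem. Substituting $x=1+y$ and $y\mapsto 1/y$ into the formula of Step~2, the two arguments of $T_G$ simplify to
$$1+\tfrac{(x-1)(y-1)}{y^t-1}\Big|_{x=1+y,\,y\to 1/y}=\tfrac{y^{t+1}-1}{y^{t+1}-y},\qquad y^t\big|_{y\to 1/y}=y^{-t},$$
while the prefactor becomes the $y\mapsto 1/y$ image of $((y^t-1)/(y-1))^{v(G)-c(G)}$; a comparable reduction of the prefactor inside $F$ under $y\mapsto 1/y$ shows
$$T_{\widehat G}\!\left(1+y,\tfrac{1}{y}\right)=F(1/y).$$
Combined with Step~1, $\HS_{{\cal B}^{F_t}_G}(y)=y^{te(G)-v(G)+c(G)}\,F(1/y)$, and reading off coefficients of $y^k$ on both sides gives $\dim_k=[y^{te(G)-v(G)+c(G)-k}]F(y)$, which is exactly the claim.

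The substantive content is the corank--nullity computation of Step~2; Steps~1 and~3 are routine manipulations. The main pitfall I anticipate is Step~3, where one must carefully track how the substitution $y\mapsto 1/y$ interacts with the two rational arguments of $T_G$ and with the prefactor $((y^t-1)/(y-1))^{v(G)-c(G)}$; in particular the identity $(y^{-(t+1)}-1)/(y^{-(t+1)}-y^{-1})=(y^{t+1}-1)/(y^t-1)$ is the key piece of bookkeeping that makes Step~3 go through.
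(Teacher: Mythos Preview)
Your approach is essentially the same as the paper's: reduce to $\widehat G$ via Theorem~\ref{tcol}, invoke Corollary~\ref{tutte-t=1} for $\widehat G$, and translate back to $G$ via the parallel-extension identity. The paper simply cites the identity from Brylawski--Oxley (Lemma~6.3.24 there), whereas you rederive it from the corank--nullity expansion, which is slightly more self-contained. The paper also organizes the substitution as $T_{\widehat G}(1+1/y,y)=F(y)$ directly, while you go through $T_{\widehat G}(1+y,1/y)=F(1/y)$; these are equivalent reparametrizations.

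There is, however, a computational slip in Step~3. Substituting $x=1+y$ and $y\mapsto 1/y$ into the first argument of $T_G$ in your Step~2 formula gives
\[
1+\frac{y\cdot(1/y-1)}{(1/y)^t-1}=1+\frac{(1-y)y^t}{1-y^t}=\frac{1-y^{t+1}}{1-y^t}=\frac{y^{t+1}-1}{y^t-1},
\]
not $\dfrac{y^{t+1}-1}{y^{t+1}-y}$ as you wrote. This is in fact what you need: it matches the first argument of $T_G$ in $F(1/y)$, exactly via the identity $(y^{-(t+1)}-1)/(y^{-(t+1)}-y^{-1})=(y^{t+1}-1)/(y^t-1)$ that you single out at the end. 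So your conclusion $T_{\widehat G}(1+y,1/y)=F(1/y)$ is correct, but the displayed intermediate value should be corrected to $\frac{y^{t+1}-1}{y^t-1}$; as written, the line suggests a match with $F(y)$'s argument rather than $F(1/y)$'s, which is confusing.
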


Consider the graph $\widehat{G}$ constructed in the proof of Theorem~\ref{tcol}. 
We need the following technical lemma which was proved in~\cite{BrOx}.
\begin{lem} [Lemma 6.3.24 in~\cite{BrOx}]
$$T_{\widehat{G}}(x , y)= \left( \frac{y^t-1}{y-1} \right)^{v(G)-c(G)}\cdot  T_G \left( \frac{y^{t}-y+x(y-1)}{y^{t}-1}, y^t \right). $$
\end{lem}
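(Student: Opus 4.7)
The plan is to chain Theorem~\ref{tcol}, Corollary~\ref{tutte-t=1} and the Brylawski--Oxley lemma stated right after the theorem, so the formula falls out by two substitutions.

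First, by Theorem~\ref{tcol} the Hilbert series of $\mathcal{B}_G^{F_t}$ agrees with that of $\mathcal{B}_{\widehat{G}}^{F}$. Applying Corollary~\ref{tutte-t=1} to the graph $\widehat{G}$ (and using $e(\widehat{G})=t\cdot e(G)$, $v(\widehat{G})=v(G)$, $c(\widehat{G})=c(G)$) I get
$$\HS_{\mathcal{B}_G^{F_t}}(s)=T_{\widehat{G}}\!\left(1+s,\,\tfrac{1}{s}\right)\cdot s^{N},\qquad N:=t\cdot e(G)-v(G)+c(G).$$

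Second, I would change variables $y:=1/s$. Expanding $T_{\widehat{G}}(1+s,1/s)=\sum_j p_j s^j$ as a Laurent polynomial in $s$, multiplication by $s^N$ yields an honest polynomial in $s$, and
$$\dim_{\bK}\!\left(\mathcal{B}_G^{F_t}\right)_k=[s^{k}]\bigl(s^{N}\cdot T_{\widehat{G}}(1+s,\tfrac{1}{s})\bigr)=p_{k-N}=[y^{N-k}]\,T_{\widehat{G}}\!\left(1+\tfrac{1}{y},\,y\right),$$
since $T_{\widehat{G}}(1+1/y,y)=\sum_j p_j y^{-j}$.

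Third, I apply the Brylawski--Oxley lemma with the substitution $x=1+1/y$. A short algebra check gives $(1+1/y)(y-1)=y-1/y$, so
$$\frac{y^{t}-y+(1+\tfrac{1}{y})(y-1)}{y^{t}-1}=\frac{(y^{t+1}-1)/y}{y^{t}-1}=\frac{y^{t+1}-1}{y^{t+1}-y},$$
which is exactly the first argument of $T_G$ in the theorem. The lemma therefore reads
$$T_{\widehat{G}}\!\left(1+\tfrac{1}{y},\,y\right)=\left(\frac{y^{t}-1}{y-1}\right)^{v(G)-c(G)} T_G\!\left(\frac{y^{t+1}-1}{y^{t+1}-y},\,y^{t}\right),$$
and combining this identity with the coefficient expression from the previous step is the theorem.

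There is no serious conceptual obstacle: the proof reduces to "known Hilbert series formula + known Tutte polynomial identity". The two points that demand care are the Laurent-polynomial bookkeeping when passing between the variables $s$ and $y=1/s$ (the stated "polynomial" is really a Laurent polynomial in $y$, but the coefficient of $y^{N-k}$ is unambiguous) and guessing the substitution $x=1+1/y$ that turns the generic Brylawski--Oxley formula into the precise argument $(y^{t+1}-1)/(y^{t+1}-y)$ appearing in the statement.
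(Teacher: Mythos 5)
Your argument is circular: in your third step you ``apply the Brylawski--Oxley lemma with the substitution $x=1+1/y$,'' but that lemma is exactly the statement you were asked to prove. What you have actually written is the derivation of Corollary~\ref{=} and Theorem~\ref{tutt} \emph{from} the lemma --- which is precisely what the paper does after quoting it --- while the lemma itself is left untouched. (The paper gives no proof either; it cites Lemma~6.3.24 of~\cite{BrOx}. But a blind proof attempt is supposed to supply the missing argument, not consume the statement as an input.) Moreover, even setting the circularity aside, the route through Hilbert series is structurally incapable of producing the result: the Hilbert series of $\mathcal{B}_{\widehat{G}}^{F}$ only determines the one-variable specialization $T_{\widehat{G}}(1+s,1/s)$, i.e.\ the restriction of $T_{\widehat{G}}$ to the curve $(x-1)y=1$, and no bookkeeping in $s$ and $y=1/s$ can recover the two-variable polynomial $T_{\widehat{G}}(x,y)$ from its values along a one-parameter family.

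A genuine proof must work directly with the Tutte polynomial of the $t$-thickening. The cleanest elementary route is the rank--nullity expansion
$$T_{\widehat{G}}(x,y)=\sum_{\widehat{A}\subseteq E(\widehat{G})}(x-1)^{r(E(\widehat{G}))-r(\widehat{A})}\,(y-1)^{|\widehat{A}|-r(\widehat{A})},$$
with $r(E(\widehat{G}))=v(G)-c(G)$. Group the subsets $\widehat{A}$ by the set $A\subseteq E(G)$ of edges having at least one clone in $\widehat{A}$; then $r(\widehat{A})=r(A)$, and summing over the number $j\geq 1$ of clones chosen from each $e\in A$ contributes a factor $\sum_{j=1}^{t}\binom{t}{j}(y-1)^{j-1}=\frac{y^t-1}{y-1}$ per edge of $A$. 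This gives
$$T_{\widehat{G}}(x,y)=\sum_{A\subseteq E(G)}(x-1)^{r(E)-r(A)}(y-1)^{|A|-r(A)}\left(\frac{y^t-1}{y-1}\right)^{|A|},$$
and since the claimed right-hand side expands, using $X-1=\frac{(x-1)(y-1)}{y^t-1}$ and $Y-1=y^t-1$ for $X=\frac{y^t-y+x(y-1)}{y^t-1}$, $Y=y^t$, to exactly the same sum, the identity follows. Your algebraic verification that $x=1+1/y$ turns the first argument into $\frac{y^{t+1}-1}{y^{t+1}-y}$ is correct, but it belongs to the proof of Corollary~\ref{=}, not of this lemma.
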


After the substitution $x\to 1+\frac{1}{y}$, we get the following equality. 
\begin{cor} 
\label{=}
$$T_{\widehat{G}}\left(1+\frac{1}{y} , y\right)= \left( \frac{y^t-1}{y-1} \right)^{v(G)-c(G)}\cdot  T_G \left( \frac{y^{t+1}-1}{y^{t+1}-y}, y^t \right). $$ 
\end{cor}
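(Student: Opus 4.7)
The plan is to apply the preceding lemma directly, substituting $x = 1 + \tfrac{1}{y}$ and simplifying. Since the outer prefactor $\left(\tfrac{y^t-1}{y-1}\right)^{v(G)-c(G)}$ and the second argument $y^t$ of $T_G$ on the right-hand side do not involve $x$, the only thing to verify is that the first argument of $T_G$ transforms correctly, i.e. that
$$\frac{y^t - y + (1+\tfrac{1}{y})(y-1)}{y^t - 1} = \frac{y^{t+1}-1}{y^{t+1} - y}.$$

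The key algebraic step is the identity $(1+\tfrac{1}{y})(y-1) = \tfrac{(y+1)(y-1)}{y} = \tfrac{y^2-1}{y}$. Substituting this into the numerator of the left-hand side gives
$$y^t - y + \frac{y^2-1}{y} \;=\; \frac{y(y^t-y) + y^2 - 1}{y} \;=\; \frac{y^{t+1} - 1}{y},$$
so after dividing by $y^t - 1$ and factoring $y$ out of the denominator one obtains exactly $\tfrac{y^{t+1}-1}{y^{t+1}-y}$.

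There is no real obstacle here: the corollary is a one-line consequence of the lemma, and the author's own remark (\emph{``After the substitution $x \to 1 + \tfrac{1}{y}$''}) is essentially the entire proof. The only thing to watch for is a minor algebraic slip when expanding $(1+\tfrac{1}{y})(y-1)$ and when combining the two terms over the common denominator $y$.
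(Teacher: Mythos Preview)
Your proof is correct and follows exactly the approach indicated in the paper, which simply remarks that the corollary is obtained from the lemma by the substitution $x \to 1+\tfrac{1}{y}$. You have merely supplied the algebraic verification that the paper omits.
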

\begin{proof}[Proof of Theorem~\ref{tutt}]
Algebra ${\cal B}_{G}^{F_t}$ is isomorphic to the algebra ${\cal B}_{\widehat{G}}^{F}$ (which was shown in the proof of Theorem~\ref{tcol}), furthermore they are isomorphic as graded algebras. 
So it is enough to show that dimension of the $k$-th graded component of ${\cal B}_{\widehat{G}}^{F}$ is equal to the coefficient of the monomial $y^{t\cdot e(G) -v(G)+c(G) -k}=y^{e(\widehat{G}) -v(\widehat{G})+c(\widehat{G}) -k}$ 
in the polynomial $ T_{\widehat{G}}(1+\frac{1}{y} , y).$ This fact is true by Corollary~\ref{tutte-t=1}. 
\end{proof}

\begin{thm}
\label{calc}
For any positive integer $t\geq n$, it is possible to restore the Tutte polynomial of any connected graph $G$ on $n$ vertices 
knowing only the dimensions of each graded component of the algebra ${\cal B}_{G}^{F_{t}}$.

\begin{proof}
By Theorem~\ref{tcol} we know that the degree  of  the maximal nonempty graded  component of ${\cal B}_{G}^{F_{t}}$
is equal to the maximum of $t\cdot(e(G)-act_G(F))-\omega(F)$ taken over $F$. It attains its maximal value for the empty forest (i.e. $F=\emptyset$). 
Then we know the value of $t\cdot e(G)$, and hence, we know the number of edges of the graph $G$.

Since we know that $t\cdot e(G) -v(G)+c(G)=t\cdot e(G) -n+1$ ($G$ is connected, i.e., $c(G)=1$),
by Theorem~\ref{tutt} we can calculate the polynomial $$\left( \frac{y^t-1}{y-1} \right)^{v(G)-c(G)} \cdot T_G \left( \frac{y^{t+1}-1}{y^{t+1}-y}, y^t \right)$$ from the Hilbert series. Then we  can also calculate $T_G \left( \frac{y^{t+1}-1}{y^{t+1}-y}, y^t \right)$.

It is well known that for any graph $G$, its Tutte polynomial $T_G(x,y)$ is equal to  $\sum_F(x-1)^{c(F)-c(G)}y^{act_G(F)}$, where 
the summation is taken over all spanning forests of $G$. Then we obtain

$$T_G \left( \frac{y^{t+1}-1}{y^{t+1}-y}, y^t \right)=\sum_F \left(\frac{y^{t+1}-1}{y^{t+1}-y}-1\right)^{n-1-e(F)}y^{t\cdot act_G(F)}=$$ $$=\sum_F \left(\frac{1}{y(y^{t-1}+\cdots+1)}\right)^{n-1-e(F)}y^{t\cdot act_G(F)}.$$

Hence, we can restore the polynomial $$\sum_F \left(y^{t-1}+\cdots+1\right)^{e(F)}y^{t\cdot act_G(F)+e(F)}.\eqno (*)$$

Since $e(F)<t$, we can compute the number of usual spanning forests with a fixed pair of parameters $e(F)$ and $act_G(F)$.
Indeed, consider the monomial of the minimal degree in the polynomial~$(*)$, and represent it in the form $s\cdot y^m$. Observe that $s$ is the number of spanning forests $F$ such that $F\equiv m\ (mod\ t)$ and $act_G(F)=\left[\frac{m}{t}\right].$ Remove from the polynomial~$(*)$ all summands for these spanning forests, and repeat this operation until we get~$0$.

Note again that
$T_G(x,y)=\sum_{a,b} \#\{ F : e(F)=a, \ act(F)=b\}\cdot (x-1)^{n-1-a} \cdot y^b$. Therefore we know the whole Tutte polynomial of~$G$, since we know the number of usual spanning forests with any fixed number of edges and any fixed external
activity.
\end{proof}
\end{thm}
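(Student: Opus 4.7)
The plan is to extract information from the Hilbert series in several stages, peeling off factors that we already understand from Theorems~\ref{tcol} and~\ref{tutt}, until what remains is a generating polynomial that encodes the pairs $(e(F), act_G(F))$ over all spanning forests $F$. The condition $t \geq n$ will be crucial for separating the contributions of distinct pairs.

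First, I would recover the number of edges $e(G)$ as follows. By Theorem~\ref{tcol} the top-degree graded component corresponds to the empty forest $F = \emptyset$, which has $e(F) = 0$, $\omega(F) = 0$, and $act_G(F) = e(G)$. So the highest nonzero degree in the Hilbert series equals $t \cdot e(G) - 0 - 0 = t \cdot e(G)$; dividing by the known $t$ gives $e(G)$. Combined with $v(G) = n$ and $c(G) = 1$, this lets me identify the polynomial
\[
H(y) = \left(\frac{y^t - 1}{y - 1}\right)^{n-1} \cdot T_G\!\left(\frac{y^{t+1}-1}{y^{t+1}-y},\, y^t\right)
\]
predicted by Theorem~\ref{tutt}. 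Since the left factor depends only on $n$ and $t$, I divide it out to obtain $T_G\!\left(\frac{y^{t+1}-1}{y^{t+1}-y},\, y^t\right)$ explicitly as a polynomial (or rational function) in $y$.

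Next I would expand this using Tutte's forest-activity formula $T_G(x,y) = \sum_F (x-1)^{c(F)-c(G)} y^{act_G(F)}$, where the sum runs over spanning forests. Plugging in $x = \frac{y^{t+1}-1}{y^{t+1}-y}$ gives $x - 1 = \frac{y - 1}{y^{t+1} - y} = \frac{1}{y(1 + y + \cdots + y^{t-1})}$, and since $G$ is connected, $c(F) - c(G) = (n - e(F)) - 1$. After clearing the negative powers of $y$ appropriately, the data the algorithm can read off is equivalent to the polynomial
\[
P(y) = \sum_F (1 + y + \cdots + y^{t-1})^{e(F)} \cdot y^{\,t \cdot act_G(F) + e(F)}.
\]

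The last step, which I expect to be the main obstacle, is to extract $\#\{F : e(F) = a,\ act_G(F) = b\}$ from $P(y)$. Here is where the hypothesis $t \geq n$ enters decisively: for every spanning forest $F$ we have $e(F) \leq n - 1 < t$, so each summand of $P(y)$ is a polynomial of degree exactly $t \cdot act_G(F) + e(F) + (t-1) \cdot e(F) = t(act_G(F) + e(F)) - e(F) + e(F)$ (a slight variant) whose \emph{lowest} nonzero term sits at degree $t \cdot act_G(F) + e(F)$. Since $e(F) < t$, the residue modulo $t$ of this lowest degree recovers $e(F)$ uniquely, and the quotient recovers $act_G(F)$. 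Thus I can iteratively strip off the lowest-degree monomial of $P(y)$: its degree determines the pair $(a, b) = (e(F), act_G(F))$, its coefficient gives the count of such forests, and I then subtract the entire known contribution $\#\{F\} \cdot (1 + y + \cdots + y^{t-1})^{a} y^{tb + a}$ and repeat. Once I know all numbers $\#\{F : e(F) = a,\ act_G(F) = b\}$, the Tutte polynomial is reconstructed directly from Tutte's formula.
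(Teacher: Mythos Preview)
Your approach is essentially identical to the paper's proof: recover $e(G)$ from the top degree of the Hilbert series, use Theorem~\ref{tutt} to isolate $T_G\!\left(\frac{y^{t+1}-1}{y^{t+1}-y},\, y^t\right)$, rewrite it via the forest--activity expansion to obtain $P(y)=\sum_F (1+y+\cdots+y^{t-1})^{e(F)} y^{\,t\cdot act_G(F)+e(F)}$, and then iteratively read off the pair $(e(F),act_G(F))$ from the minimal-degree monomial using $e(F)\le n-1<t$. One small slip to fix: for the empty forest $act_G(\emptyset)=0$, not $e(G)$ (adding a single edge to $\emptyset$ never creates a cycle), though your conclusion that the top degree equals $t\cdot e(G)$ is still correct since $k=t(e(G)-0)-0$.
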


\bigskip
%%%%%%%%%%%%%%%%%%%%%%%%%%%%%%%%%%%%%%%%%
\section{\bf Vector configurations and Hypergraphs}
\subsection{\bf Algebra corresponding to vector configuration}

The following algebra was introduced by A.\,Postnikov, B.\,Shapiro and M.\,Shapiro in~\cite{PSS}.

\begin{Not}
Given  a finite set $A=\{a_1,\ldots, a_m\}$ of vectors  in $\bK^n$, let $\Phi_{m}^{F}$ be the  commutative algebra over $\bK$ generated by $\{ \phi_i: \ i\in [m] \}$ with relations $\phi_i^{2}=0$, for each $i\in [m]$.

For $i=1,\ldots,n$, set 
$X_i=\sum_{k\in [m]} a_{k,i} \phi_k.$
Denote by ${\cal C}_{A}$ the subalgebra of $\Phi_{m}^{F}$ generated by $X_1,\ldots ,X_n$.
\end{Not}

The Hilbert series of ${\cal C}_{A}$ also corresponds to a specialization of the Tutte polynomial of the corresponding vector matroid, see Theorem~3 in~\cite{PSS}.

\begin{thm}[cf.~\cite{PSS}] \label{vector}
The dimension of algebra ${\cal C}_A$  is equal to the number of independent subsets in $V$. 
Moreover, the dimension of the $k$-th graded component is equal to the number of independent subsets $S\subset A$ such that $k = m - |S| - act(S)$.
\end{thm}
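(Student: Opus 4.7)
My plan is to mirror the strategy behind Theorem~\ref{thmPSh}, treating the vector configuration $A$ in place of the signed incidence vectors of a graph. First I would introduce a quotient-algebra partner of $\mathcal{C}_A$. For $\ell \in (\bK^n)^*$ set $D_\ell = \#\{k : \ell(a_k) \neq 0\}$ and define $\mathcal{B}_A = \bK[x_1,\ldots,x_n]/J_A$ with $J_A = (\ell^{D_\ell+1} : \ell \in (\bK^n)^*)$; since $D_\ell$ depends only on which vectors of $A$ lie outside $\ker\ell$, finitely many generators suffice. The surjection $x_i \mapsto X_i$ sends $\ell$ to $\sum_k \ell(a_k)\phi_k$, a combination of exactly $D_\ell$ distinct square-zero variables, so $\ell^{D_\ell+1}$ vanishes in $\mathcal{C}_A$ by pigeonhole. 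This yields a graded surjection $\mathcal{B}_A \twoheadrightarrow \mathcal{C}_A$ and reduces the theorem to computing $\dim \mathcal{B}_A$ in each grade.

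Next I would induct on $|A|$ via a matroid deletion--contraction argument. Pick a vector $a \in A$ that is neither a loop ($a = 0$) nor a coloop, let $A' = A \setminus \{a\}$, and let $A/a$ denote the image of $A'$ in $\bK^n/\langle a\rangle$. The target recurrence is
\begin{equation*}
H_{\mathcal{B}_A}(t) \;=\; H_{\mathcal{B}_{A'}}(t) \;+\; t \cdot H_{\mathcal{B}_{A/a}}(t),
\end{equation*}
which I would realize by a short exact sequence whose "contraction" summand sits inside $\mathcal{B}_A$ as the annihilator of a functional $\ell_a$ with $\ell_a(a) \neq 0$, shifted by one in degree, and whose "deletion" quotient is obtained by imposing the extra relations that effectively erase $a$. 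Loop and coloop cases are tame direct computations and the base $A = \emptyset$ is trivial. Combinatorially, this recurrence is exactly the Tutte recursion $T_A = T_{A\setminus a} + T_{A/a}$ for the vector matroid of $A$.

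The refined graded statement requires tracking $act(S)$ through the induction. An independent subset of $A$ either avoids $a$ (in which case it is an independent subset of $A'$) or contains $a$ (in which case it corresponds to an independent subset of $A/a$ via $S \mapsto S \setminus \{a\}$); one must equip $A'$ and $A/a$ with the induced linear orders and verify that $act$ transforms with precisely the grading shift demanded by $k = m - |S| - act(S)$ on each side of the sequence. This activity bookkeeping, which is the combinatorial content of the refined Tutte recursion, is where I expect the main obstacle to lie, particularly in the coloop and loop corrections. Once it is in hand, the Hilbert series of $\mathcal{B}_A$ is forced termwise, proving the graded dimension formula; setting $t = 1$ recovers the total-dimension claim, and the equality of dimensions upgrades the surjection $\mathcal{B}_A \twoheadrightarrow \mathcal{C}_A$ into the desired isomorphism, yielding Theorem~\ref{vector}.
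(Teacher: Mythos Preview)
Your argument has a genuine gap at the very last step. The surjection $\mathcal{B}_A \twoheadrightarrow \mathcal{C}_A$ gives only the inequality $\dim \mathcal{C}_A \le \dim \mathcal{B}_A$, and your deletion--contraction computes $\dim \mathcal{B}_A$. Nothing in the outline produces the reverse inequality $\dim \mathcal{C}_A \ge \dim \mathcal{B}_A$; the sentence ``the equality of dimensions upgrades the surjection $\mathcal{B}_A \twoheadrightarrow \mathcal{C}_A$ into the desired isomorphism'' is therefore circular. You have established an upper bound on $\dim \mathcal{C}_A$ but no lower bound, and it is the lower bound that carries the content of the theorem. (Running the short exact sequence on $\mathcal{C}_A$ itself is not automatic: $\mathcal{C}_A$ is a \emph{subalgebra} of $\Phi_m^F$, not a quotient, and identifying the kernel of the deletion map on $\mathcal{C}_A$ with a shifted copy of $\mathcal{C}_{A/a}$ is exactly the nontrivial step you have not addressed.)

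The paper does not give its own proof of this statement; it quotes the result from \cite{PSS}. The argument there, and in \cite{PSh} for the graphical case, supplies the missing lower bound directly inside $\Phi_m^F$: for each independent subset $S\subset A$ one writes down an explicit element of $\mathcal{C}_A$ (a suitable product of the $X_i$, whose expansion in the square-free monomial basis of $\Phi_m^F$ is controlled by $S$ and its externally active elements), and one checks that the resulting family is linearly independent using the monomial basis of $\Phi_m^F$. That combinatorial construction is what your plan is missing; once it is in place, your surjection and your count of $\dim \mathcal{B}_A$ do finish the proof, but as written the proposal proves only $\dim \mathcal{C}_A \le \#\{\text{independent subsets of }A\}$.
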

\begin{cor}
Given a vector configuration $A$ in $\bK^n$, the Hilbert series of algebra ${\cal C}_A$ is
$$\HS_{{\cal C}_A}(t)=T_A\left(1+t,\frac{1}{t}\right)\cdot t^{|A|-rk(A)},$$ 
where $T_A$ is the Tutte polynomial corresponding to $A,$ $|A|$ is the number of vectors in the configuration and $rk(A)$ is the dimension of the linear span of these vectors.
\end{cor}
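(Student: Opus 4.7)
The plan is to derive this corollary directly from Theorem~\ref{vector} combined with the standard expansion of the Tutte polynomial of a matroid over its independent sets. By Theorem~\ref{vector}, the dimension of the $k$-th graded component of ${\cal C}_A$ equals the number of independent subsets $S\subset A$ with $k=|A|-|S|-act(S)$, so
$$\HS_{{\cal C}_A}(t)=\sum_{S\subset A\text{ independent}} t^{|A|-|S|-act(S)},$$
where the external activity is computed with respect to some fixed linear order on $A$. Thus the proof reduces to showing that the right-hand side of the claimed formula has the same expansion.

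The key identity I would invoke is the independent-set expansion of the Tutte polynomial for the vector matroid of $A$:
$$T_A(x,y)=\sum_{I\subset A\text{ independent}}(x-1)^{rk(A)-|I|}\, y^{act(I)}.$$
This is the direct matroid-theoretic generalization of the forest-sum formula $T_G(x,y)=\sum_F (x-1)^{c(F)-c(G)}y^{act_G(F)}$ already used in the proof of Theorem~\ref{calc}; it is obtained from Tutte's basis-activity expansion by grouping each basis $B$ with the independent subsets it refines, turning the internal-activity factor into a $(x-1)^{rk(A)-|I|}$ contribution.

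With this identity in hand, substituting $x=1+t$ and $y=1/t$ yields
$$T_A\!\left(1+t,\tfrac{1}{t}\right)=\sum_{I\text{ independent}} t^{rk(A)-|I|-act(I)},$$
and multiplying by $t^{|A|-rk(A)}$ gives exactly $\sum_I t^{|A|-|I|-act(I)}$, matching the Hilbert series above. The only step requiring any work is the independent-set expansion of $T_A$, which is standard; apart from that, the argument is a one-line substitution, so no real obstacle is expected.
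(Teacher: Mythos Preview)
Your argument is correct and is exactly the derivation the paper has in mind: the corollary is stated without proof, as an immediate consequence of Theorem~\ref{vector} together with the independent-set expansion of the Tutte polynomial (the matroid analogue of the forest-sum formula $T_G(x,y)=\sum_F(x-1)^{c(F)-c(G)}y^{act_G(F)}$ that the paper itself invokes in the proof of Theorem~\ref{calc}). Your substitution and bookkeeping are accurate, so nothing needs to be added.
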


The set of different vector configurations depends on continuous parameters.
Additionally,  there are uncountably many non-isomorphic algebras each corresponding to its vector configuration. At the same time the number of matroids is countable, and  it is finite for a fixed number of vectors.
 It means that there are many different vector configurations with the same corresponding matroid. 
 In other words, it is in principle impossible  to reconstruct a vector configuration and its algebra from the corresponding matroid.

\medskip
%%%%%%%%%%%%%%%%%%%%%%%%%%%%%%%%%%%%%%%%%%%%%%%%%%%%%%%%%%
\subsection{\bf Hypergraphs}
\label{sec:hyper}

In this subsection we present a family of algebras corresponding to a hypergraph.
Almost all algebras from this family (generic algebras)  have the same Hilbert series and this generic Hilbert series counts forests of this hypergraph. There are many definitions of spanning trees of a hypergraph,
for example: a spanning cacti in~\cite{Abd}; a hypertree
in~\cite{B&CO} (also known as an arboreal hypergraph
in~\cite{Ber}). However all these definitions allow trees to have different number of edges, whence  spanning tree of a usual graph should have the same number of edges. We define
spanning trees such that this property holds and also other
natural properties hold. Also we define the hypergraphical matroid and the corresponding Tutte polynomial, whose points $T(2,1)$ and $T(1,1)$ calculate the
numbers of spanning forests and of spanning trees, resp. Similar definition of spanning trees and forests was presented in~\cite{Kal}, for that definition there is also Tutte polynomial for a hypergraph, however, there is no matroid.

\smallskip

First we define the family of algebras.

\medskip

Given a hypergraph $H$ on $n$ vertices, let us associate commuting variables $\phi_e, e\in H$ to all edges of $H$.

 Set $\Phi_H$  be the algebra generated by $\{ \phi_e: \ e\in H \}$ with relations $\phi_e^{2}=0$, for any $e\in H$.

Define $C=\{c_{i,e}\in \bK: i\in [n],\  e\in H\}$ as a {\it set of parameters of $H$}, for any edge $e\in H$,  $c_{i,e}=0$ for
 vertices non-incident to $e$, and $\sum_{i=1}^{n} c_{i,e}=0$.
 
For $i=1,\ldots,n$, set
$$X_i=\sum_{e\in H} c_{i,e} \phi_e,$$
 
Denote by ${\cal C}_{H(C)}^F$ the subalgebra of $\Phi_{H}$ generated by 
$X_1,\ldots ,X_n$, and denote by $\widehat{{\cal C}}_{H}^F$ the family of such subalgebras.

The following trivial properties hold for this family of algebras.

\begin{prop}
\label{dim}
\emph{(I)} For a hypergraph $H$,
the dimension of the space of parameters is $\sum_{e\in E} (|e|-1)$.

\emph{(II)} Given a set of parameters $C$ and non-zero numbers $a_e$,~$e\in E$, let $C'$ be the set of parameters such that $c'_{i,e}=a_e c_{i,e}$ for any $i\in [n]$
 and $e\in E$. Then the subalgebras for $C$  and for $C'$  are isomorphic.
\end{prop}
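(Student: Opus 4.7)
For part (I), the approach is a straightforward dimension count carried out edge by edge. A set of parameters is a collection of scalars $\{c_{i,e}\}_{i\in[n],\,e\in H}$, and the two conditions---$c_{i,e}=0$ when $i\notin e$, together with $\sum_{i=1}^{n}c_{i,e}=0$---decouple completely across edges. Thus the space of parameters is the direct sum, over $e\in H$, of the subspaces $V_e=\{(c_{i,e})_{i\in e}\in\bK^{|e|}:\sum_{i\in e}c_{i,e}=0\}$. Each $V_e$ is an $|e|$-dimensional space cut out by one nontrivial linear equation, hence has dimension $|e|-1$. Summing yields $\sum_{e\in H}(|e|-1)$, which is the claimed total.

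For part (II), the plan is to realize the transformation $c_{i,e}\mapsto a_e c_{i,e}$ as an automorphism of the ambient algebra $\Phi_H$ which sends the generators $X_i$ of ${\cal C}_{H(C)}^F$ to the corresponding generators $X'_i$ of ${\cal C}_{H(C')}^F$. Concretely, since the generators $\phi_e$ of $\Phi_H$ are algebraically independent modulo the relations $\phi_e^2=0$, and since $(a_e\phi_e)^2=a_e^2\phi_e^2=0$, the rule $\phi_e\mapsto a_e\phi_e$ extends to a well-defined algebra homomorphism $\sigma\colon\Phi_H\to\Phi_H$. Because each $a_e\neq 0$, the inverse rule $\phi_e\mapsto a_e^{-1}\phi_e$ provides a two-sided inverse, so $\sigma$ is an automorphism.

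Applying $\sigma$ to the generator $X_i=\sum_{e\in H}c_{i,e}\phi_e$ yields
$$\sigma(X_i)=\sum_{e\in H}c_{i,e}(a_e\phi_e)=\sum_{e\in H}(a_ec_{i,e})\phi_e=\sum_{e\in H}c'_{i,e}\phi_e=X'_i.$$
Therefore $\sigma$ restricts to a surjection $\sigma|_{{\cal C}_{H(C)}^F}\colon{\cal C}_{H(C)}^F\twoheadrightarrow{\cal C}_{H(C')}^F$, and applying the same argument with the scalars $a_e^{-1}$ produces the inverse map. The restriction of $\sigma$ is therefore an algebra isomorphism ${\cal C}_{H(C)}^F\cong{\cal C}_{H(C')}^F$, completing part (II).

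There is no genuine obstacle here: part (I) is a linear-algebra bookkeeping exercise, and part (II) is an observation that edgewise rescaling extends to an automorphism of $\Phi_H$. The only thing worth being careful about is confirming that the automorphism $\sigma$ is well defined, which is immediate from the fact that the defining relations $\phi_e^2=0$ are homogeneous in each $\phi_e$ and thus invariant under rescaling.
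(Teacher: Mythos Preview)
Your proof is correct. The paper itself treats this proposition as trivial and offers no argument at all (it is introduced with ``The following trivial properties hold for this family of algebras''), so your write-up in fact supplies more detail than the original; the edgewise decomposition for (I) and the rescaling automorphism $\phi_e\mapsto a_e\phi_e$ of $\Phi_H$ for (II) are exactly the intended justifications.
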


\begin{cor}
For a usual graph $G$,  almost all algebras from $\widehat{{\cal C}}_{G}$ are isomorphic to ${\cal C}_{G}$.
\end{cor}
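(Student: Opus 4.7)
The plan is to deduce the corollary directly from Proposition~\ref{dim}(II) combined with a dimension count. First I would interpret the usual graph $G$ as a hypergraph in which every hyperedge has cardinality $2$. For each such edge $e=\{i,j\}$ with $i<j$, the defining conditions $c_{k,e}=0$ for $k\notin e$ together with $c_{i,e}+c_{j,e}=0$ leave a single free parameter $a_e:=c_{i,e}\in\bK$ (and force $c_{j,e}=-a_e$). Hence a set of parameters $C$ for $G$ is nothing but a point $(a_e)_{e\in E(G)}\in\bK^{|E(G)|}$, in agreement with $\sum_{e\in E(G)}(|e|-1)=|E(G)|$ from Proposition~\ref{dim}(I).

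Second, I would identify the distinguished parameter set: the original algebra ${\cal C}_G^{F}$ of Notation~\ref{forest} corresponds exactly to the choice $C^{(0)}$ with $a_e=1$ for every $e\in E(G)$. Let $U\subset\bK^{|E(G)|}$ denote the locus where $a_e\neq 0$ for every edge $e$. Then $U$ is the complement of the finite union of coordinate hyperplanes $\{a_e=0\}$, so it is Zariski open and dense in the parameter space, which is precisely what is meant by \emph{almost all}.

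Finally, for an arbitrary $C\in U$ I would apply Proposition~\ref{dim}(II) with the nonzero scaling factors $a_e^{-1}$. This produces a parameter set $C'$ with $c'_{i,e}=a_e^{-1}c_{i,e}\in\{\pm 1,0\}$, coinciding edge by edge with $C^{(0)}$, and the proposition yields
\[
{\cal C}_{G(C)}^{F}\;\cong\;{\cal C}_{G(C')}^{F}\;=\;{\cal C}_{G(C^{(0)})}^{F}\;=\;{\cal C}_{G}^{F}.
\]
Since this isomorphism holds for every $C$ in the dense open set $U$, the corollary follows. There is no genuine obstacle here: the statement is essentially a formal consequence of the scaling invariance in Proposition~\ref{dim}(II) together with the observation that each edge of a usual graph contributes exactly one independent parameter, so generic nonvanishing of these parameters allows us to normalize back to the canonical configuration of $\pm 1$'s.
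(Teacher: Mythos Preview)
Your proof is correct and follows exactly the approach the paper intends: the corollary is stated in the paper without an explicit proof, as an immediate consequence of Proposition~\ref{dim}, and your argument spells out precisely that deduction---one free parameter per edge, generic nonvanishing, and rescaling via part~(II).
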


We define a hypergraphical matroid using the definition of an independent set of edges of a hypergraph.
\begin{defin} Let $H$ be a hypergraph on $n$ vertices. A set $F$ of edges is called {\it independent} if there is a set of parameters $C$ of $H$, such that vectors corresponding to edges from $F$ are linearly independent. In other words, $F$  is  {independent} if, for a generic set of parameters of $H$, vectors are linearly independent. Define the {\it hypergraphical matroid} of $H$ as the matroid with the ground set $E(H)$.
\end{defin}

There is a combinatorial definition of an independent set of edges. First we need to define a cycle of $H$.
 \begin{defin} A subset of edges $C\subset E$ is called a {\it cycle} if 
\begin{itemize} 
\item $|C|=|\cup_{e\in C} e|$

\item There is no subset $|C'\subset C|,$ such that the first property  holds for $C'$. 
\end{itemize}
\end{defin}
Definitions of dependence and of a cycle are similar.
\begin{thm}
\label{Dep=Cyc}
A subset of edges $X\subset E$ is dependent if and only if there is a cycle~$C\subset X$.
\end{thm}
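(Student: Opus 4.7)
The strategy is to translate the hypergraphical matroid definition into linear algebra and then match the resulting rank condition with the combinatorial cycle condition. For each edge $e$ I would introduce
$$U_e:=\{\,x\in\bK^n:\operatorname{supp}(x)\subseteq e,\ \textstyle\sum_{i}x_i=0\,\},$$
so that the vector $v_e:=\sum_i c_{i,e}\mathbf{e}_i$ ranges freely over $U_e$ as the parameters vary, and $F$ is independent precisely when $\{v_e:e\in F\}$ can be chosen linearly independent. The preliminary step is the dimension formula
$$\dim \sum_{e\in T} U_e = |V(T)|-c(T)\qquad\text{for every }T\subseteq E,$$
where $V(T):=\bigcup_{e\in T}e$ and $c(T)$ counts connected components of the sub-hypergraph $(V(T),T)$. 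This I would prove by inspecting the annihilator: a linear form $\sum_i\alpha_i x_i$ vanishes on $U_e$ iff $\alpha$ is constant on $e$, hence vanishes on the whole sum iff $\alpha$ is constant on each connected component.

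For the direction ($\Leftarrow$), the idea is that if $X$ contains a cycle $C$, then since $|C|=|V(C)|$ the matrix $M$ whose columns are $v_e$ for $e\in C$, restricted to the rows indexed by $V(C)$, is square; and because each $v_e$ has zero coordinate sum, $\mathbf{1}^{\top}M=0$, which forces $\det M=0$ for every parameter choice. Thus $\{v_e:e\in C\}$ is linearly dependent in every specialization, so $X\supseteq C$ is dependent by the very definition.

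For the direction ($\Rightarrow$), I would invoke the classical (Rado-type) selection lemma: linearly independent $v_e\in U_e$ can be chosen simultaneously over $e\in F$ iff $|T|\leq\dim\sum_{e\in T}U_e$ for every $T\subseteq F$. Combined with the dimension formula above, this turns dependence of $X$ into the combinatorial statement that some $T\subseteq X$ satisfies $|T|>|V(T)|-c(T)$. Splitting $T$ into its connected components and using the additive identity $|T|-|V(T)|+c(T)=\sum_i(|T_i|-|V(T_i)|+1)$, some component $T_i$ must satisfy $|T_i|\geq|V(T_i)|$. Inside $T_i$ I would choose a minimal $C$ with $|C|\geq|V(C)|$, and then check that $C$ is a cycle: if $|C|>|V(C)|$, the pairwise disjoint ``unique-vertex'' sets $V_e^{\mathrm{uni}}:=e\setminus V(C\setminus\{e\})$ satisfy $\sum_{e\in C}|V_e^{\mathrm{uni}}|\leq|V(C)|<|C|$, so some $V_e^{\mathrm{uni}}$ is empty and the corresponding $e$ can be removed while preserving $|C'|\geq|V(C')|$, contradicting minimality; therefore $|C|=|V(C)|$, and minimality automatically rules out any proper $C'\subsetneq C$ with $|C'|=|V(C')|$, making $C$ a cycle inside $X$. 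The main obstacle is the Rado-type selection lemma; once that is available the combinatorial cycle construction is essentially bookkeeping.
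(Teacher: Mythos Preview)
Your argument is correct, but it follows a genuinely different route from the paper's.

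The paper proves only the nontrivial implication ``no cycle $\Rightarrow$ independent'' (the converse being immediate, exactly as in your square-matrix remark), and it does so \emph{constructively} via Theorem~\ref{map}: if $X$ has no cycle, Hall's marriage theorem applied to the bipartite graph on $X$ versus $V\setminus\{v_1\}$ produces a map $g\colon e\mapsto(g_1(e),g_2(e))$ with $g_1(e),g_2(e)\in e$ whose image is an ordinary graph forest; the specific parameter choice $v_e=z_{g_1(e)}-z_{g_2(e)}$ is then linearly independent by the classical graph case. In other words, the paper reduces hyperforests to ordinary forests and exhibits one good parameter set.

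You instead compute the rank function $\dim\sum_{e\in T}U_e=|V(T)|-c(T)$ via the annihilator, invoke the Rado-type selection lemma for subspaces (valid here since $\bK$ is infinite, e.g.\ by applying the finite Rado theorem to spanning sets of the $U_e$), and then extract a cycle from any violating $T$ by passing to a connected component and taking a minimal offending subset. Both approaches ultimately rest on a marriage/selection principle; the difference is that the paper applies Hall to the combinatorial incidence bipartite graph and gets Theorem~\ref{map} as an independently interesting by-product, while your approach makes the matroid rank function $|V(T)|-c(T)$ explicit and is more self-contained for this single statement, at the cost of importing the subspace version of Rado as a black box.
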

We present a proof of this theorem after  Theorem~\ref{map}.

\begin{defin} A set of edges $F$ is called a {\it spanning forest} if $F$ has no cycles, in other words, $F$ is forest if and only if $F$ is an independent set (by Theorem~\ref{Dep=Cyc}). %Furthermore, all maximal spanning forests of a hypergraph $H$ have the same size. 
A set of edges $T\subset H$ is called a {\it spanning tree} if it is a forest and $T$ has exactly $v(H)-1$ edges.

A hypergraph $H$ is called {\it strongly connected} if it has at least one spanning tree. 
\end{defin}

\begin{prop}\label{forest-in-tree} 
Maximal spanning forests of a hypergraph have the same number of edges.
In fact, if $H=(V,E)$ is a strongly connected hypergraph, then for any spanning forest $F\subset E$ there is a spanning tree $T$
which contains $F$ (i.e. $F\subset T\subset E$).
\end{prop}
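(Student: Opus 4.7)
The plan is to reduce both assertions to the matroid exchange property by identifying spanning forests of $H$ with independent sets of a concrete vector matroid. First I would observe that the independence condition in the definition is equivalent to genericity: for a fixed subset $F\subset E(H)$, the condition ``the vectors attached to edges of $F$ are linearly independent'' is the non-vanishing of a maximal minor, which is a Zariski-open condition on the parameter space of $H$ described in Proposition~\ref{dim}. Hence, if the condition holds for some set of parameters $C$, it holds on a Zariski-dense open subset, and in particular for any generic choice $C_0$. Consequently, the collection of independent sets of $H$ coincides with the collection of independent sets of the honest vector matroid obtained from $C_0$, so the hypergraphical matroid is a well-defined matroid in the usual sense.

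Second, I would invoke Theorem~\ref{Dep=Cyc} to identify the combinatorial and algebraic notions: a subset $F\subset E(H)$ is a spanning forest (no cycle) precisely when it is independent. Therefore maximal spanning forests are exactly the bases of the hypergraphical matroid. By the standard matroid base-exchange (or the augmentation axiom), all bases of a matroid have the same cardinality, which yields the first assertion of Proposition~\ref{forest-in-tree} at once.

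For the second assertion, assume $H$ is strongly connected, so that there exists a spanning tree $T_0$ of size $v(H)-1$. Because each column vector $(c_{i,e})_{i\in[n]}$ satisfies $\sum_i c_{i,e}=0$ by definition of a set of parameters, all edge-vectors of $H$ lie in the hyperplane $\{\sum_i v_i=0\}\subset\bK^n$ of dimension $v(H)-1$; hence the rank of the hypergraphical matroid is at most $v(H)-1$. Combined with the existence of an independent set $T_0$ of that size, the rank equals $v(H)-1$, so the bases of the matroid are precisely the spanning trees. Given any spanning forest $F$, independence together with the augmentation axiom produces a basis $T\supseteq F$, and $T$ is the desired spanning tree.

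The main obstacle I expect is the first paragraph: making sure that ``there exists $C$'' in the definition can be promoted to ``for generic $C$'' so that a genuine matroid structure is obtained. Once this Zariski-genericity observation is in place, the rest of the proof is routine matroid theory applied via Theorem~\ref{Dep=Cyc}. One should also be mindful that the exchange property must be invoked in its standard form for vector matroids, so a brief remark that the generic specialization does produce an actual vector matroid (not merely a set system) is the one technical point that deserves explicit attention.
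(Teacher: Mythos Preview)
Your proposal is correct and follows essentially the same route as the paper: both identify spanning forests with independent sets via Theorem~\ref{Dep=Cyc} and then appeal to the standard fact that maximal independent sets in a vector matroid have common size, so any independent set can be enlarged to a basis. The paper's proof is terser---it simply says one can ``add edges to a forest until the number of edges is less than the dimension of the linear space''---whereas you spell out the Zariski-genericity step justifying that the hypergraphical matroid is an honest vector matroid and compute the rank explicitly as $v(H)-1$; these are welcome clarifications of points the paper leaves implicit, but the underlying argument is the same.
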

\begin{proof}
By Theorem~\ref{Dep=Cyc} we know that a spanning forest is the same as an independent set of edges. Then we can add edges to a forest until the number of edges
is less than the dimension of the linear space.
  Then all maximal spanning forests have the same size. Hence, if a hypergraph is strongly connected, then any spanning forest is contained in some spanning tree. 
\end{proof}

\smallskip
The Hilbert series of algebras in $\widehat{\cal C}_{H}^F$ are also counting forests of~$H$.
\begin{thm}\label{H:genHS1}
For a hypergraph $H$, generic algebras from $\widehat{{\cal C}}_{H}^F$ have the same Hilbert series.
The dimension  of the $k$-th graded component of a generic algebra equals
the number of spanning forests $F$ in $H$ with the  external activity $e(H)-e(F)-k$.  
\end{thm}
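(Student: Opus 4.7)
The plan is to reduce Theorem~\ref{H:genHS1} to the vector configuration Theorem~\ref{vector} by exhibiting each member of the family $\widehat{\mathcal{C}}_H^F$ as a Postnikov-Shapiro-Shapiro algebra of a vector configuration.

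Given a set of parameters $C=\{c_{i,e}\}$, I would define for each edge $e\in E(H)$ the vector $v_e=(c_{1,e},\ldots,c_{n,e})\in\bK^{n}$ and set $A_C=\{v_e:e\in E(H)\}$. The defining relations for $C$ (namely $c_{i,e}=0$ for $i\notin e$ and $\sum_i c_{i,e}=0$) say exactly that each $v_e$ lies in the $(|e|-1)$-dimensional subspace $W_e\subset\bK^n$ cut out by these conditions. The generators $X_i=\sum_e c_{i,e}\phi_e$ of $\mathcal{C}_{H(C)}^{F}$ are nothing but the coordinate forms of the vectors $v_e$ in the basis $\{\phi_e\}$, so we have an equality of algebras $\mathcal{C}_{H(C)}^{F}=\mathcal{C}_{A_C}$.

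Next, I would invoke Theorem~\ref{vector} and its corollary: the Hilbert series of $\mathcal{C}_{A_C}$ is a specialization of the Tutte polynomial of the vector matroid $M(A_C)$, so it depends on $C$ only through the matroid $M(A_C)$. Since rank conditions for $v_e$ are polynomial (minor) conditions in the entries of $C$, there is a Zariski-open, non-empty subset of the parameter space $\bK^{\sum_e(|e|-1)}$ on which $M(A_C)$ attains its generic (maximal) independent sets. By the very definition of the hypergraphical matroid, a subset $F\subset E(H)$ is independent in that matroid iff $\{v_e\}_{e\in F}$ is linearly independent for generic $C$; taking the finite intersection over all such $F$ of the non-empty open loci where independence is witnessed, we conclude that on a dense open set of $C$ the vector matroid $M(A_C)$ equals the hypergraphical matroid of $H$. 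In particular, all generic algebras in $\widehat{\mathcal{C}}_H^F$ share one and the same Hilbert series.

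Finally, by Theorem~\ref{vector} the dimension of the $k$-th graded component of $\mathcal{C}_{A_C}$ for generic $C$ equals the number of independent subsets $S\subset A_C$ with $k=|E(H)|-|S|-\mathrm{act}(S)$, where activity is computed with respect to any fixed linear order of the edges. Using Theorem~\ref{Dep=Cyc}, independent sets are precisely spanning forests of $H$, which yields the claimed count of spanning forests $F$ with external activity $e(H)-e(F)-k$. The main (only) obstacle is confirming that the locus in $C$-space where $M(A_C)$ equals the hypergraphical matroid is simultaneously Zariski-open and non-empty; but this is essentially tautological from the definition of the hypergraphical matroid, since a finite intersection of non-empty Zariski-open subsets of an irreducible affine space is non-empty.
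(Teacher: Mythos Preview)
Your argument is correct and follows the same route as the paper's own proof: identify $\mathcal{C}_{H(C)}^{F}$ with the Postnikov--Shapiro--Shapiro algebra $\mathcal{C}_{A_C}$ of the associated vector configuration, apply Theorem~\ref{vector}, and use Theorem~\ref{Dep=Cyc} to pass between independent sets and spanning forests. The paper compresses all of this into two sentences, whereas you additionally spell out the Zariski-openness argument ensuring that the vector matroid $M(A_C)$ coincides with the hypergraphical matroid on a dense open set of parameter space; this is a welcome clarification of what ``generic'' means here, but it is not a different method.
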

\begin{proof}
By Theorem~\ref{Dep=Cyc} we can change the definition of spanning forests to independent sets.  Consider a generic set of parameters $C$. By Theorem~\ref{vector} we know the Hilbert series of ${\cal C}_{H(C)}^F$ and it is the same for all generic sets of parameters.
\end{proof}

We define the Tutte polynomial of $H$ as the Tutte polynomial of the corresponding hypergraphical matroid.  By the theorems above we know that
\begin{itemize}
\item $T_H(2,1)$ is the number of spanning forests;
\item $T_H(1,1)$ is the number of maximal spanning forests. In fact, $T_H(1,1)$ is the number of spanning trees if $H$ is srongly connected.
\end{itemize}

\smallskip
By Theorem~\ref{H:genHS1}, we get that a generic Hilbert series is a specialization of the Tutte polynomial of $H$.
\begin{cor}
Given a hypergraph $H$ and its generic set of parameters $C$, the Hilbert series of the algebra ${\cal C}_{H(C)}^F$ is given by
$$\HS_{{\cal C}_H(C)^F}(t)=T_G\left(1+t,\frac{1}{t}\right)\cdot t^{e(H)-rk_H},$$ 
where $rk_H$ is the size of a maximal spanning forest of $H$.
\end{cor}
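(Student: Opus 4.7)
My plan is to derive this corollary as a direct consequence of Theorem~\ref{H:genHS1}, mimicking the manipulation that yields Corollary~\ref{tutte-t=1} from Theorem~\ref{thmPSh} in the graphical case. First, Theorem~\ref{H:genHS1} tells me that
$$\HS_{{\cal C}_{H(C)}^F}(t)=\sum_{F}t^{e(H)-e(F)-act_H(F)},$$
where $F$ ranges over all spanning forests of $H$. By the definition of the hypergraphical matroid together with Theorem~\ref{Dep=Cyc}, these forests coincide exactly with the independent sets of the matroid of $H$, and $act_H(F)$ agrees with the matroid-theoretic external activity relative to the fixed linear order on $E(H)$.

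Next, I would invoke the standard independent-set activity expansion of the Tutte polynomial of a matroid $M$, namely
$$T_M(x,y)=\sum_{I \text{ independent}}(x-1)^{rk(M)-|I|}y^{act(I)},$$
which follows from Tutte's basis-activity expansion by grouping each basis $B$ with its unique maximal externally-nonactive independent subset (and can also be verified directly against the corank-nullity form). Specializing $M$ to the hypergraphical matroid of $H$ — so that $rk(M)=rk_H$ and the independent sets are precisely the spanning forests of $H$ — and substituting $x=1+t$, $y=1/t$ gives
$$T_H(1+t,1/t)=\sum_{F}t^{rk_H-|F|-act_H(F)}.$$
Multiplying both sides by $t^{e(H)-rk_H}$ aligns the exponents with the first display and yields the claimed formula for $\HS_{{\cal C}_{H(C)}^F}(t)$.

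The argument is essentially routine. The one ingredient that deserves explicit justification is the independent-set activity expansion of $T_M$, a classical matroid-theoretic identity which I would either cite from a standard reference such as Oxley or derive in half a line from the basis-activity formula. The identification of $rk_H$ with the rank of the hypergraphical matroid is immediate from Proposition~\ref{forest-in-tree}, so no additional combinatorial work is required. The main potential obstacle is therefore not conceptual but simply a matter of carefully tracking the exponents $e(H)-e(F)-act_H(F)$, $rk_H-|F|-act_H(F)$, and $e(H)-rk_H$ so that they combine correctly — a routine bookkeeping check.
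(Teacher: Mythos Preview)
Your argument is correct and matches the paper's intended derivation: the corollary is stated without proof, being the direct analogue of Corollary~\ref{tutte-t=1} obtained from Theorem~\ref{H:genHS1} (itself proved via Theorem~\ref{vector}) together with the independent-set activity expansion $T_M(x,y)=\sum_{I}(x-1)^{rk(M)-|I|}y^{act(I)}$, which the paper also invokes explicitly in the proof of Theorem~\ref{calc}. Your bookkeeping of the exponents is accurate, and the identification $rk_H=rk(M)$ via Proposition~\ref{forest-in-tree} is exactly right.
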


There is another definition of forests/trees of $H$, which again shows that it is a generalization of forests/trees of a usual graph.
\begin{thm}\label{map}
A subset of edges $X\subset E$  is a forest (tree) if and only if there is map from edges to pairs: $e_k\to (i,j),$ where $v_i,v_j\in e_k$, such that these pairs form a forest (tree) in the complete graph  $K_{n}$. 
\end{thm}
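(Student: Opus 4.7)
My plan is to deduce the theorem as a direct consequence of the definition of the hypergraphical matroid via a multilinear expansion of the exterior product of the associated vectors. The tree case follows immediately from the forest case: a spanning tree $X$ in $H$ has $|X|=n-1$, so any pair assignment that yields a forest in $K_n$ must yield a forest with $n-1$ edges in $K_n$, which is automatically a spanning tree of $K_n$; conversely, a spanning tree of $K_n$ has $n-1$ edges, so the corresponding $X$ is a forest of maximal size and therefore a spanning tree in $H$. Thus I only need to handle the forest statement.

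For the ``if'' direction, given a pair assignment $e\mapsto (i_e,j_e)\subseteq e$ whose image is a forest in $K_n$, specialize the parameters by $c_{i_e,e}=1$, $c_{j_e,e}=-1$, and $c_{i,e}=0$ for all remaining $i$; this satisfies $\sum_i c_{i,e}=0$. Then the vector of $e$ equals the signed-incidence vector $\mathbf{1}_{i_e}-\mathbf{1}_{j_e}$, and by the classical graphical-matroid criterion (already implicit in Theorem~\ref{matroid} applied to $K_n$) the vectors indexed by $X$ are linearly independent precisely because the chosen pairs form a forest in $K_n$. The definition of the hypergraphical matroid then witnesses that $X$ is independent, i.e.\ a forest in $H$.

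For the ``only if'' direction, let $X=\{e_1,\ldots,e_k\}$ be a forest in $H$. By definition there exist parameters making the vectors $a_{e_j}=\sum_{v\in e_j}c_{v,e_j}\mathbf{1}_v$ linearly independent, so the wedge
\[
\omega \;=\; a_{e_1}\wedge\cdots\wedge a_{e_k}\;\in\;\textstyle\bigwedge^{k}\bK^n,
\]
viewed as a polynomial in the free parameters $c_{v,e}$ (subject only to $\sum_v c_{v,e}=0$), is not identically zero. Fix an arbitrary vertex $v_0(e)\in e$ for each $e\in X$ and use the zero-sum relation to rewrite
\[
a_e \;=\; \sum_{v\in e\setminus\{v_0(e)\}} c_{v,e}\,(\mathbf{1}_v-\mathbf{1}_{v_0(e)}).
\]
Expanding $\omega$ multilinearly produces
\[
\omega \;=\; \sum_{(i_j)\in\prod_j (e_j\setminus\{v_0(e_j)\})} \Bigl(\prod_{j=1}^{k} c_{i_j,e_j}\Bigr)\bigwedge_{j=1}^{k}\bigl(\mathbf{1}_{i_j}-\mathbf{1}_{v_0(e_j)}\bigr).
\]
Distinct tuples $(i_j)_j$ give rise to distinct squarefree multilinear monomials $\prod_j c_{i_j,e_j}$ in the parameter ring, so the coefficients in the expansion are independent. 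Since $\omega\not\equiv 0$, some tuple must contribute a nonzero wedge $\bigwedge_j(\mathbf{1}_{i_j}-\mathbf{1}_{v_0(e_j)})$, and by the same graphical-matroid criterion this forces the pairs $(i_j,v_0(e_j))_j$ to form a forest in $K_n$, yielding the required pair assignment.

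The only real subtlety is the monomial-independence step in the expansion: one must observe that each $c_{v,e}$ is a genuinely free parameter and that distinct tuples $(i_j)$ pick out distinct squarefree monomials in these parameters, so the vanishing of every wedge coefficient would force $\omega\equiv 0$ as a polynomial. This is the only delicate point; everything else is bookkeeping.
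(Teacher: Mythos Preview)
Your proof is correct and takes a genuinely different route from the paper's. The paper works with the combinatorial characterization ``forest $=$ no cycles'': it first augments the given forest to a spanning tree by adjoining full hyperedges, verifies Hall's condition on the edge/vertex bipartite incidence (using the defining inequality $|C|\le|\bigcup_{e\in C}e|$ for cycle-free sets), and then runs a greedy growth procedure from a root vertex to manufacture the pair assignment. You instead work with the linear-algebraic characterization ``forest $=$ independent'' and read off the pair assignment as a nonvanishing term in the multilinear expansion of the wedge $a_{e_1}\wedge\cdots\wedge a_{e_k}$. Your argument is shorter, handles both directions symmetrically, and avoids Hall's theorem and the augmentation trick; the paper's argument is more constructive and yields an explicit algorithm for the map.

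One organizational caveat you should be aware of: in the paper's logical order, Theorem~\ref{Dep=Cyc} (the equivalence ``no cycles $\Leftrightarrow$ independent'') is derived \emph{from} Theorem~\ref{map}, so when your ``only if'' direction begins with ``by definition there exist parameters making the vectors linearly independent'' you are silently invoking a statement whose proof, as written in the paper, rests on the very theorem you are proving. Your argument cleanly establishes ``independent $\Leftrightarrow$ map to $K_n$-forest exists'' with no circularity, and the implication ``cycle $\Rightarrow$ dependent'' is trivial; what your approach does \emph{not} supply on its own is ``no cycles $\Rightarrow$ independent''. If you want your proof to slot into the paper's logical scaffolding, either flag that you are taking ``forest'' in the matroid-independence sense throughout, or note that the remaining implication still requires the Hall-type counting the paper uses.
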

\begin{proof}
Consider our forest $F$ and the hypergraph $H$, add to them $n-1-e(F)$ full edges, i.e. edges of type $V$.
We get a new hypergraph $H'$ and a subset of edges $F'$. It is clear that $F'$ is a tree, because there is no cycle without new edges, 
and with at least one new edge we need to cover all vertices and their number is bigger than number of edges. So let $F'=T$ and we will prove our Theorem for a spanning tree.

\smallskip Consider the bipartite graph $B$ with two sets of vertices:
the first set are edges of $T$ and  the second are vertices $V\setminus v_1$, where there is an edge between $e_i$ and $v_j$ if and only if $v_j \in e_i$.
 
There is a perfect matching in $B$, because we can use Hall's marriage theorem (see~\cite{Hall} or any classical book).
 We know that $e(T)=|V\setminus v_1|$ and, for any $X\subset T$, they cover at least $e(X)$ vertices 
(otherwise these edges cover these vertices and may be $v_1$ and then $X$ has cycle). 
Consider a bijection $f$ from $T$ to $V\setminus v_1$ constructed by this perfect matching.
Now we construct by recursion a map $g$ from $T$ to pairs of vertices: 

\begin{enumerate} 
\item $A:=\{v_1\}$ and $ B:=T$ 
\item repeat until $A\neq V:$ 
\begin{itemize} 
\item choose the minimal edge $e_i$ from $B$ such that $e_i\cap A \neq \emptyset$ 
\item $g(e_i)=(u,f(e_i))$, where $u\in e_i\cap A$ 
\item $A:=A\cup \{f(e_i)\}$ and $B:=B\setminus \{e_i\}$ 
\end{itemize}
\end{enumerate} 
It works, otherwise we can not chose such an edge
$e_i$, then either $B=\emptyset$ or $B\neq \emptyset$ at this
moment. We know that $|A|+|B|=n$, then in the first case we already
have $A=V$; in the second case edges from $B$ have vertices only
from $V\setminus A$, then there is a cycle on these edges, i.e. $T$ is not a tree.
Then this algorithm gives some usual tree. 
\end{proof}

\medskip
\begin{proof}[Proof of Theorem~\ref{Dep=Cyc}] 
Assume the contrary, then there is a subset $X\subset E$, which is dependent and without cycles, i.e. $X$ is a spanning forest.

By Theorem~\ref{map} we know that there is map $g$ from $X$ to the pairs of vertices, which gives a usual forest.
Consider the vector set $$\{a_e:=z_{g_1(e)}-z_{g_2(e)},\ e\in X\},$$ where $z_v=(0,\ldots,0,1,0,\ldots,0)$ is the unit vertex correspinding to the vertex $v$.
Since $g$ gives the usual forest, these vectors are independent. Hence,  generic vectors $\{b_e,\ e\in X\}$ are also linear independent.
We get that the edges are independent, contradiction. 
\end{proof}

By the induced subgraph on vertices $V'\subset V$, we assume 
a hypergraph $(V',E')$, where $E'$ are all edges of $E$, which
have vertices only from $V'$ (i.e. $e\in E'$ if $e\subset V'$).
This definition works well with colorings of hypergraphs,
because if we want to color a hypergraph such a way that there are no
monochromatic edges, then it is the same as splitting vertices into sets with empty induced subgraphs. Also this definition works well
with standard sense of connectivity.

\begin{prop}\label{str-components} Let $V_1$ and $V_2$ be the subsets of vertices such that the induced subgraphs of $H$ on $V_j$
are strongly connected and $V_1\cap V_2\neq \emptyset$. Then
the induced subgraph of $H$ on $V_1\cup V_2$ is also strongly
connected 
\end{prop}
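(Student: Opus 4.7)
My plan is to use the vector-configuration characterization of the hypergraphical matroid. Fix a generic parameter set $C$ for $H$ and form, for each $e \in E(H)$, the vector $v_e := \sum_{i} c_{i,e} z_i \in \bK^n$, where $z_i$ is the $i$-th standard basis vector. Every $v_e$ with $e \subseteq V'$ automatically lies in
\[
W_{V'} := \Bigl\{x \in \bK^n : x_i = 0 \text{ for } i \notin V',\ \sum_{i} x_i = 0\Bigr\}, \qquad \dim W_{V'} = |V'|-1,
\]
so by genericity of $C$ the induced subgraph $H_{V'}$ is strongly connected if and only if $\mathrm{span}\{v_e : e \in E(H_{V'})\} = W_{V'}$ (the trivial upper bound is saturated exactly when a spanning tree of size $|V'|-1$ exists).

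Under this reformulation, the hypotheses read $\mathrm{span}\{v_e : e \in E(H_{V_j})\} = W_{V_j}$ for $j = 1,2$, so the vectors attached to $E(H_{V_1 \cup V_2}) \supseteq E(H_{V_1}) \cup E(H_{V_2})$ span a subspace containing $W_{V_1} + W_{V_2}$. It is therefore enough to verify $W_{V_1} + W_{V_2} = W_{V_1 \cup V_2}$, which I would do by a short direct check that $W_{V_1} \cap W_{V_2} = W_{V_1 \cap V_2}$ together with the inclusion-exclusion count
\[
\dim(W_{V_1} + W_{V_2}) = (|V_1|-1) + (|V_2|-1) - (|V_1 \cap V_2|-1) = |V_1 \cup V_2|-1 = \dim W_{V_1 \cup V_2}.
\]
The containment $W_{V_1}+W_{V_2} \subseteq W_{V_1\cup V_2}$ then upgrades to equality, which forces the edge vectors of $H_{V_1\cup V_2}$ to span $W_{V_1\cup V_2}$, i.e.\ $H_{V_1\cup V_2}$ is strongly connected.

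The only real obstacle is pinpointing where the hypothesis $V_1 \cap V_2 \neq \emptyset$ is used: it guarantees $\dim W_{V_1 \cap V_2} = |V_1 \cap V_2|-1$ (otherwise this dimension would be $0$ rather than $-1$), and hence the inclusion-exclusion step gives the correct answer. This matches the intuition that the statement genuinely fails without the hypothesis, since two disjoint strongly connected induced subgraphs cannot be glued into a strongly connected one. A minor technicality is that the same generic $C$ must witness strong connectivity for $H_{V_1}$, $H_{V_2}$ and $H_{V_1 \cup V_2}$ simultaneously; this is routine because the non-generic locus for each is a finite union of proper algebraic subsets of the parameter space, so a common generic $C$ exists.
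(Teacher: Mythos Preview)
Your proof is correct and follows essentially the same approach as the paper: both translate strong connectivity into the statement that the edge vectors span the zero-sum coordinate subspace $W_{V'}$, and then verify $W_{V_1}+W_{V_2}=W_{V_1\cup V_2}$. The only cosmetic difference is that the paper, instead of your inclusion--exclusion dimension count, picks a vertex $u\in V_1\cap V_2$ and observes directly that each $W_{V_j}$ is spanned by the vectors $z_v-z_u$ for $v\in V_j$, so their sum visibly contains $z_v-z_u$ for all $v\in V_1\cup V_2$; this makes the role of the common vertex a bit more explicit but is otherwise the same argument.
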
 
\begin{proof}
For any vertex $v_i\in V$ we will consider  the corresponding unit vector
$z_{v_i}=(0,\ldots,0,1,0,\ldots,0).$
Fix a vertex $u$, which lie in the intersection $V_1\cap V_2$. 
Let $H_1$ and $H_2$ be induced subgraphs on $V_1$ and $V_2$, resp.

Consider vectors $b_i$ corresponding to $e_i\in E_1$. 
We know that there is a spanning tree of the graph $H_1$, then the dimension of the linear space of such vectors is $|V_1|-1$, furthermore any sum
of coordinates of any vector is zero. Hence, for any $v\in V_1$, $z_v-z_u\in\text{span}\{b_i:\ e_i\in E_1\}.$ Similarly we get
the same for $H_2$ and, hence, we have the same for $H_1\cup H_2$.

We get that for any $v\in V_1\cup V_2$, $z_v-z_u\in\text{span}\{b_i:\ e_i\in E_1\cup E_2\}.$ 
Hence, the hypergraph $H_1\cup H_2$ has $|V_1\cup V_2|-1$ independent edges, then has a spanning tree. 
We have that the induced subgraph of $H$ on vertices $V_1\cup V_2$ is strongly connected, since it
has all edges from $H_1\cup H_2$. 
\end{proof}

\bigskip
%%%%%%%%%%%%%%%%%%%%%%%%%%%%%%%%%%%%%%%%%%%%%%%%%%%%%%%%%%%%%%

\section{\bf Algebras corresponding to spanning trees, Problems}

In this section we discuss analogous algebras counting spanning trees.
Recall the definition of algebras ${\cal B}_G^T$ and ${\cal C}_G^T$ borrowed from~\cite{PSh}. 
\begin{Not}
\label{deftree}
 Take an undirected graph $G$ with $n$ vertices.

\emph{(I)} Let $\Phi_{G}^{T}$ be the commutative algebra over $\bK$ generated by $\{ \phi_e: \ e\in G \}$ with relations $\phi_e^{2}=0$, for any $e\in G$, and $\prod_{e\in H}\phi_e=0$, for any cut $H \subset E(G)$.

Fix a linear order of vertices of $G$. 
For $i=1,\ldots,n$, set 
$$X_i=\sum_{e\in E(G)} c_{i,e} \phi_e,$$
 where $c_{i,e}$ as in Notation~\ref{forest}.
Denote by ${\cal C}_{G}^{T}$ the subalgebra of $\Phi_{G}^{T}$ generated by $X_1,\ldots ,X_n$.

\emph{(II)}   Consider the ideal $J_{G}^{T}$ in the ring $\bK[x_1,\cdots,x_n]$ generated by $$p_{[n]}=x_1+\ldots+x_n$$ and by
$$p_I^{T}=\left(\sum_{i\in I} x_i\right)^{D_I},$$
where $I$ ranges over all nonempty proper subsets of vertices, and $D_I$ is the total number of edges between vertices in $I$ and vertices outside $I$.  Define the algebra   ${\cal B}_{G}^{T}$ as the quotient $\bK[x_1,\dots,x_n]/ J_{G}^{T}.$
\end{Not}

The case of disconnected graphs  is not interesting, because both algebras are trivial. In the paper~\cite{PSh} the following result was proved:

\begin{thm}[cf.~\cite{PSh}]
\label{thmPShtree}

 For any connected graph $G$, algebras  ${\cal B}_{G}^{T}$ and ${\cal C}_{G}^{T}$ are isomorphic;
 their total dimension over $\bK$ is equal to the number of spanning trees in $G$.

Moreover, the dimension of the $k$-th graded component of these algebras  equals
the number of spanning trees of $G$ with external activity $e(G)-v(G)+1-k$.
\end{thm}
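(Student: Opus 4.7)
The plan is to follow, step by step, the proof of Theorem~\ref{thmPSh} for spanning forests, while accounting for the two extra features of the tree setting: the cut-product relations $\prod_{e \in H}\phi_e = 0$ imposed in $\Phi_G^T$, and the dropped exponents $D_I$ (rather than $D_I + 1$) in $J_G^T$. The structure of the argument is (i) a natural graded surjection $\psi \colon \mathcal{B}_G^T \twoheadrightarrow \mathcal{C}_G^T$ sending $x_i \mapsto X_i$, and (ii) a matching Hilbert-series computation which forces $\psi$ to be an isomorphism and supplies the combinatorial counts.

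For (i), I would check that every generator of $J_G^T$ maps to zero. The relation $p_{[n]} = x_1 + \cdots + x_n$ maps to $\sum_i X_i = 0$, since each edge $e$ contributes $+\phi_e$ at one endpoint and $-\phi_e$ at the other. For $\emptyset \ne I \subsetneq [n]$ we have $\sum_{i \in I} X_i = \sum_{e \in \partial I}\epsilon_e\,\phi_e$ with signs $\epsilon_e \in \{\pm 1\}$ and $|\partial I| = D_I$; expanding the $D_I$-th power by the multinomial theorem and using $\phi_e^2 = 0$ leaves only the symmetric term $\pm D_I!\,\prod_{e \in \partial I}\phi_e$, which vanishes in $\Phi_G^T$ precisely because $\partial I$ is a cut. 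The cut-product relations are essential here: without them only the $(D_I + 1)$-st power would vanish, as in the forest algebra.

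For (ii), I would bound $\dim \mathcal{C}_G^T$ from below and $\dim \mathcal{B}_G^T$ from above. On the $\mathcal{C}$ side, for each spanning tree $T$ I would construct an element $\eta_T \in \mathcal{C}_G^T$ of degree $|T^-| = e(G) - v(G) + 1 - act_G(T)$ whose image in $\Phi_G^T$ has leading monomial $\pm\prod_{e \in T^-}\phi_e$ with respect to the edge order used to define external activity; linear independence of the $\eta_T$ then follows by a triangularity/activity argument, in the spirit of classical no-broken-circuit bases. On the $\mathcal{B}$ side, a Gr\"obner-style reduction adapted from the forest case, now employing the additional relation $\sum_i x_i = 0$ together with the lowered powers $(\sum_{i \in I} x_i)^{D_I}$ for proper $I$, brings every element of $\mathcal{B}_G^T$ into the span of tree-indexed standard monomials. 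Matching these two bounds against Corollary~\ref{tutte-t=1} (and a one-edge deletion--contraction argument to pass from the forest specialization $T_G(1+t, 1/t)\,t^{e(G)-v(G)+1}$ to the corresponding tree specialization) identifies the Hilbert series on both sides with the claimed spanning-tree generating function.

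The principal obstacle is the lower bound on the $\mathcal{C}$ side: one must verify that for each spanning tree $T$ the product $\prod_{e \in T^-}\phi_e$ is nonzero in $\Phi_G^T$, i.e.\ that no cut of $G$ is contained in $T^-$. This is immediate from the definition of external activity, since $E(G) \setminus T^- = T^+ \supseteq T$ is a connected spanning subgraph, so removing any subset of $T^-$ keeps $G$ connected and hence $T^-$ contains no cut. With this combinatorial input the cut-product relations never force $\eta_T = 0$, and the triangularity argument goes through, simultaneously yielding the graded isomorphism $\mathcal{B}_G^T \cong \mathcal{C}_G^T$ and the spanning-tree enumeration.
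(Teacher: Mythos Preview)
The paper does not actually prove this theorem; it is stated with the attribution ``cf.~\cite{PSh}'' and no proof is given, so there is no in-paper argument to compare against. Your outline is, in fact, the strategy of the original Postnikov--Shapiro paper: a graded surjection $\mathcal{B}_G^T\twoheadrightarrow\mathcal{C}_G^T$ via $x_i\mapsto X_i$, combined with matching upper and lower dimension bounds that force it to be an isomorphism and identify the Hilbert series.

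Your treatment of step~(i) is correct, and your handling of the ``principal obstacle'' on the $\mathcal{C}$-side (that $T^-$ never contains a cut because its complement contains the spanning tree $T$) is exactly the right observation. The place where your sketch is thinnest is the upper bound on $\dim\mathcal{B}_G^T$: in \cite{PSh} this is done not by a direct Gr\"obner reduction but via a monomial basis indexed by $G$-parking functions, which are then put in bijection with spanning trees. Your phrase ``Gr\"obner-style reduction adapted from the forest case'' hides real work here, and the appeal to Corollary~\ref{tutte-t=1} plus a deletion--contraction step is roundabout---once both bounds are in place the Hilbert series drops out directly from the tree-indexed basis, without needing to pass through the forest specialization.
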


\begin{cor}
Given a connected graph $G$, the Hilbert series of the algebra ${\cal C}_G^T$ is given by
$$\HS_{{\cal C}_G^T}(t)=T_G\left(1,\frac{1}{t}\right)\cdot t^{e(G)-v(G)+c(G)},$$ 
where $c(G)$ is the number of connected components of $G$.
\end{cor}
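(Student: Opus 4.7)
The plan is to derive this corollary as a direct combinatorial bookkeeping consequence of Theorem~\ref{thmPShtree}, which already identifies the $k$-th graded dimension of ${\cal C}_G^T$ with a count of spanning trees by external activity. The only work is to repackage that count as a specialization of the Tutte polynomial.

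First I would write out the Hilbert series using Theorem~\ref{thmPShtree}:
$$\HS_{{\cal C}_G^T}(t)=\sum_{k\geq 0}\#\{T\ \text{spanning tree of } G:\ act_G(T)=e(G)-v(G)+1-k\}\cdot t^k.$$
After the change of index $j=e(G)-v(G)+1-k$, this becomes
$$\HS_{{\cal C}_G^T}(t)=t^{e(G)-v(G)+1}\sum_{T\ \text{spanning tree}} t^{-act_G(T)}.$$
Since $c(G)=1$, the exponent $e(G)-v(G)+1$ agrees with $e(G)-v(G)+c(G)$, so it remains to identify the inner sum with $T_G(1,1/t)$.

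Next I would invoke the standard expansion $T_G(x,y)=\sum_{F}(x-1)^{c(F)-c(G)}y^{act_G(F)}$, where $F$ ranges over spanning forests of $G$ (this formula is already used in the proof of Theorem~\ref{calc}). Setting $x=1$ with the convention $0^0=1$ kills every term for which $c(F)>c(G)=1$, leaving only spanning forests with a single component, i.e.\ spanning trees of $G$. Hence
$$T_G(1,y)=\sum_{T\ \text{spanning tree}} y^{act_G(T)},$$
and substituting $y=1/t$ matches the inner sum above, completing the identification.

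There is essentially no serious obstacle: the corollary is an immediate transcription of the graded-dimension statement of Theorem~\ref{thmPShtree} via the definition of the Tutte polynomial. The only mild point of care is ensuring the $0^0=1$ convention so that the specialization $x=1$ collapses the Tutte sum precisely to spanning trees; once that is acknowledged, the identity is a line of algebra.
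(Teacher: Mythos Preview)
Your proof is correct and matches the paper's implicit reasoning: the paper states this corollary without proof, as an immediate consequence of Theorem~\ref{thmPShtree} together with the spanning-forest expansion of the Tutte polynomial (the same formula $T_G(x,y)=\sum_F (x-1)^{c(F)-c(G)}y^{act_G(F)}$ that the paper invokes in the proof of Theorem~\ref{calc}). Your derivation is exactly the intended one-line bookkeeping, including the $0^0=1$ observation that collapses the sum to spanning trees at $x=1$.
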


\subsection{\bf Algebras and matroids} 
For graph $G$, we define its bridge-free matroid  as the usual graphical matroid of the graph $G'$ which is obtained from $G$ after removing  all its bridges.

\begin{prop}
\label{treematroid}
 For any pair of connected graphs $G_1$ and $G_2$ with isomorphic bridge-free matroids, their algebras ${\cal B}_{G_1}^T$ and ${\cal B}_{G_2}^T$ are isomorphic.

\begin{proof}
Notice that, if we add an edge $e$ and a vertex $v$ to $G$, such that $v$ is an endpoint of $e$ and another endpoint of $e$ is some vertex of $G$, then algebra ${\cal B}^T$ does not change (this is obvious because ${e}$ is a bridge, and hence, $\phi_e$ is one of the generators of the ideal). This operation doesn't change bridge-free matroid. Therefore it is enough to prove Proposition~\ref{treematroid} only for graphs with the same number of edges.

Assume that $|E(G_1)|=|E(G_2)|$. In this case an isomorphism of bridge-free matroids is equivalent to an isomorphism of matroids of graphs $G_1$ and $G_2$.

In fact, in Lemma~\ref{forestlem} we construct orientations $\overline{G}_1$ and $\overline{G}_2$ of graphs $G_1$ and $G_2$ on the same set of edges $E(G_1)=E(G_2)$ (it was constructed for graphs differ in one Whitney's deformation, so we can  extend it to a sequence of deformations), such that they give the same graphical matroid on edges and with these orientations the algebras ${\cal C}_{\overline{G}_1}^F$ and ${\cal C}_{\overline{G}_2}^F$  coincide as subalgebras of~$\Phi_{G_i}^F$ ($\Phi_{G_1}^F$ and $\Phi_{G_2}^F$ are the same, because graphs have common set of edges).

Let $I$ be the ideal generated by the products of edges from the cuts of $G_1$ in $\Phi_{G_1}^F$. Because the variables on edges in $G_1$ and $G_2$ are the same and $C$ is a cut in $G_1$ if and only if $C$ is a cut in $G_2$, then $I$ is also the ideal generated by the cuts of~$G_2$.

Thus we have $\Phi_{G_1}^T=\Phi_{G_1}^F/I,$ hence, 
$${\cal C}_{\overline{G}_1}^T={\cal C}_{\overline{G}_1}^F/I,$$
similarly 
$${\cal C}_{\overline{G}_2}^T={\cal C}_{\overline{G}_2}^F/I.$$
It means that the algebras ${\cal C}_{G_1}^T$ and ${\cal C}_{G_2}^T$ are also the same in orientations $\overline{G}_1$ and $\overline{G}_2$. 
\end{proof}
\end{prop}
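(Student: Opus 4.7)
The plan is to reduce to the case $e(G_1)=e(G_2)$ and then realize the two tree algebras as identical quotients of a common ambient forest algebra by an ideal that is determined entirely by the shared matroid.

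First I would record that bridges act trivially on ${\cal B}^T$. If $e$ is a bridge of $G$, then the singleton $\{e\}$ is a cut, so $\phi_e$ already belongs to the ideal defining $\Phi_G^T$ in Notation~\ref{deftree} (equivalently, the linear form corresponding to $e$ is forced to vanish in the quotient presentation). Consequently, attaching a pendant edge (a new vertex joined by a single edge to $G$) creates a bridge and leaves ${\cal B}^T$ unchanged up to isomorphism, while also leaving the bridge-free matroid unchanged. By successively attaching pendants I may assume $e(G_1)=e(G_2)$. Under this equality, isomorphism of the bridge-free matroids is equivalent to isomorphism of the full graphical matroids of $G_1$ and $G_2$.

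Next I would invoke Whitney's $2$-isomorphism theorem: $G_1$ and $G_2$ are then related by a finite sequence of vertex identifications, cleavings, and twistings. The technical input I need is the construction used in the proof of Lemma~\ref{forestlem}: along any single Whitney move one can produce orientations on a common labelled edge set for which the oriented forest subalgebras coincide as subalgebras of a single ambient algebra $\Phi^F$ (generated by one variable $\phi_e$ per edge in the common edge set, subject to $\phi_e^2=0$). Iterating move by move, I obtain orientations $\overline{G}_1$ and $\overline{G}_2$ such that ${\cal C}_{\overline{G}_1}^F={\cal C}_{\overline{G}_2}^F$ inside one fixed $\Phi^F$.

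Finally I would express the tree algebras as quotients of these forest subalgebras. By Notation~\ref{deftree}, ${\cal C}_{\overline{G}_i}^T={\cal C}_{\overline{G}_i}^F / I_i$, where $I_i$ is the ideal of $\Phi^F$ generated by the square-free monomials $\prod_{e\in C}\phi_e$ with $C$ a cut of $G_i$. Since the family of cuts is a matroid invariant and the two graphs share both the graphical matroid and the same labelled edge set, $I_1=I_2$ as ideals of $\Phi^F$; hence the two tree subalgebras are literally equal, and in particular isomorphic. The hard part will be the intermediate step of keeping a coherent common labelling across a Whitney twist, which reverses orientations on half of the graph and thereby replaces each affected $\phi_e$ by $-\phi_e$. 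This sign ambiguity is invisible in the cut generators, because they are square-free monomials and their vanishing is an unsigned condition, and it does not disturb the identification of forest subalgebras because only the linear span of the $X_i$ matters; verifying this compatibility carefully across a full Whitney sequence is the most delicate bookkeeping in the argument.
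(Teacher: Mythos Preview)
Your proposal is correct and follows essentially the same route as the paper: reduce to equal edge counts by adjoining pendant bridges, use Whitney's theorem together with the orientation argument of Lemma~\ref{forestlem} to identify the forest subalgebras inside a common $\Phi^F$, and then observe that the cut ideal is a matroid invariant so the tree quotients coincide. Your closing remarks about sign bookkeeping under twists are apt but, as you note, harmless since only the linear span of the $X_i$ and the unsigned cut monomials are involved.
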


%It is easy to check that if a connected graph $G$ has a bridge $e$ (i.e. $G-e$ is disconnected), then algebras ${\cal C}_{G}^T$ and ${\cal C}_{G/e}^T$ are isomorphic (graph $G/ e$ is  obtained from $G$ after contraction the edge $e$). So we formulate the following conjecture.
We formulate the following converse conjecture.

\begin{conjecture}
\label{conj}
 Algebras ${\cal B}_{G_1}^T$ and ${\cal B}_{G_2}^T$ for the connected graphs $G_1$ and $G_2$ are isomorphic if and only if 
 their bridge-free matroids are isomorphic.
\end{conjecture}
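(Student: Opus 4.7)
The ``if'' direction of Conjecture~\ref{conj} is exactly Proposition~\ref{treematroid}, so the task is to establish the converse: from an abstract isomorphism ${\cal B}_{G_1}^T\cong {\cal B}_{G_2}^T$ (equivalently ${\cal C}_{G_1}^T\cong {\cal C}_{G_2}^T$) I want to recover the bridge-free matroid. The plan is to mimic the reconstruction of the graphical matroid carried out in \S\,2.2 for the forest algebra, replacing ${\cal C}_G^F$ by ${\cal C}_G^T$ throughout and exploiting that bridges are automatically invisible: if $e$ is a bridge, then $\{e\}$ is a cut in $G$, so $\phi_e=0$ already in $\Phi_G^T$.

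\textbf{Reduction to bridgeless graphs.} Write $G^{\rm bf}$ for the subgraph of $G$ obtained by deleting all bridges. Using the observation in the proof of Proposition~\ref{treematroid} that attaching (or removing) a bridge does not change ${\cal B}^T$, the natural inclusion induces an isomorphism ${\cal C}_{G^{\rm bf}}^T\cong{\cal C}_G^T$. By definition the bridge-free matroid of $G$ is the graphical matroid of $G^{\rm bf}$, so it suffices to show that if $G_1,G_2$ are bridgeless and ${\cal C}_{G_1}^T\cong{\cal C}_{G_2}^T$, then their graphical matroids coincide.

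\textbf{Canonical basis and cut space.} Inside ${\cal C}_G^T$ I would re-run the recipe of \S\,2.2: define the length $\ell(Y)$ and irreducibility as there, and select a basis $\{Y_1,\dots,Y_m\}$ satisfying the same three extremal conditions (every $Y_i$ irreducible; lengths of generic linear combinations dominate lengths of special ones; $\sum\ell(Y_i)$ minimal). The goal is a tree-algebra analogue of Proposition~\ref{basis}: the linear parts $\{\lY_1,\dots,\lY_m\}$ form a basis of the first graded component; every edge $e$ of $G^{\rm bf}$ occurs in exactly one or two of the $\lY_i$ with opposite coefficients in the two-occurrence case; and each $\lY_i$ is supported in a single $2$-connected component of $G^{\rm bf}$. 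Once this is in place, the symmetric-difference construction at the end of \S\,2.2 should apply verbatim: the quantities $\tfrac12(\ell(Y_i)+\ell(Y_j)-\ell(Y_i+Y_j))$ give an abstract model of the edge set, and the $\mathbb{F}_2$-span of the associated sets $Z_i$ is isomorphic to the cut space of $G^{\rm bf}$, which determines its graphical matroid.

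\textbf{Main obstacle.} The delicate point is that the relations $\prod_{e\in H}\phi_e=0$ for cuts $H$ make the length function behave qualitatively differently from the forest case. In \S\,2.2 one uses that $\ell(\lY)$ equals the number of edges appearing in $\lY$, because distinct square-free monomials in $\phi_e$ are linearly independent in $\Phi_G^F$. In $\Phi_G^T$ a monomial vanishes as soon as its support contains a cut, so $\ell(\lY)$ is instead the size of the largest spanning forest inside the support of $\lY$, a matroidal quantity. Consequently the clean identity $\ell(Y_i)+\ell(Y_j)-\ell(Y_i+Y_j)=2\,|\lY_i\cap \lY_j|$ implicit in the forest proof has to be replaced by a matroidal version, and the technical heart of the argument becomes showing that, under the canonical-basis conditions, the support of each $\lY_i$ is an elementary bond (minimal cut) of $G^{\rm bf}$, so that the reconstruction of the cut space still works. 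I expect this step to be the main source of difficulty and to force a careful case analysis for graphs with many parallel edges, where several distinct minimal bonds can have the same matroidal length.
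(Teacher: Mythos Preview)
The paper does not prove this statement: it is explicitly presented as a \emph{conjecture} (Conjecture~\ref{conj}), immediately following Proposition~\ref{treematroid}, which establishes only the ``if'' direction. There is therefore no paper proof to compare your attempt against; the converse you are trying to prove is left open.

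Your proposal is not a proof but a research plan, and you are candid about this: the final paragraph correctly isolates the genuine obstruction. In $\Phi_G^F$ the length $\ell(\lY)$ of a degree-one element equals the size of its support, which is what drives every step of the \S\,2.2 argument---the identification of edge multiplicities via $\tfrac12(\ell(Y_i)+\ell(Y_j)-\ell(Y_i+Y_j))$, the deduction that each edge lies in at most two $\lY_i$, and the recovery of the cut space. In $\Phi_G^T$ none of this survives: $\ell(\lY)$ is the rank of the support of $\lY$ in the cographic matroid (the largest subset not containing a cut), so two distinct supports can have the same length, and the displayed difference no longer counts common edges. Your hope that the canonical-basis conditions force each $\lY_i$ to be supported on an elementary bond is plausible but unproved, and even granting it, the combinatorics of reconstructing the edge set and the cut space from the length data would need a new argument rather than the one in \S\,2.2.

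In short: the ``if'' direction is settled by the paper; your outline for the converse identifies the right difficulty but does not overcome it, which is consistent with the paper leaving the statement as a conjecture.
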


\subsection{\bf $t$-labelled trees}

It is possible to introduce similar algebras which enumerate $t$-labelled trees, but it is not very exciting. Let ${\cal B}_G^{T_t}$ be an algebra in which we change the generators $(\sum_{i\in I} x_i)^{D_I}$ of the ideal by $(\sum_{i\in I} x_i)^{tD_I}.$ The definition of ${\cal C}_G^{T_t}$ will change in a more complicated way.  

However, there is no result about a reconstruction of the Tutte polynomial from the Hilbert series, because all trees have the same number of edges and then
$\HS_{{\cal B}_G^{T_t}}(x)=(1+x)^{n-1}\HS_{{\cal B}_G^{T}}(x^t).$ In other words, the Hilbert series of ${\cal B}_G^{T_t}$ and of ${\cal B}_G^{T}$ contain the same information about the graph.

\subsection{\bf Algebras for hypergraphs.}
The main problem is to construct a family $\widehat{\cal C}^T_H$ of algebras, which count spanning trees of $H$.

By paper~\cite{PSS}, for  a hypergraph $H$ and a set of parameters $C$, we can present $\cal{C}_{H(C)}^F$ as a quotient algebra, i.e, as $\cal{B}_{H(C)}^F$. We can consider the algebra  $\cal{B}_{H(C)}^T$, which is obtained from $\cal{B}_{H(C)}^F$ by changing   the powers of the generators of the ideal (writing always one less).  By paper~\cite{ArP} algebra $\cal{B}_{H(C)}^T$ should count spanning trees of~$H$. However at this moment, we can not present $\Phi_H^T$ such that its generic subalgebra $\cal{C}_{H(C)}^T$ counts spanning trees of~$H$.

\smallskip
Probably, we need to add to $\Phi_H^F$ relations corresponding to cuts, where a cut is a subset of edges such that without it $H$ has no spanning trees. However, we need to prove it and if we want to do something similar to the proof of Theorem~\ref{thmPShtree}, then we need to define $H$-parking functions for a hypergraph.

\end{document}